\numberwithin{equation}{section}
\newtheorem{theorem}[equation]{Theorem}
\newtheorem{lemma}[equation]{Lemma}
\newtheorem{corollary}[equation]{Corollary}
\newtheorem{conjecture}[equation]{Conjecture}
\theoremstyle{remark}
\newtheorem{remark}[equation]{Remark}
\newcommand{\p}{\vskip .4cm}
\newcommand{\Z}{\mathbb{Z}}
\newcommand{\cO}{\mathcal{O}}
\newcommand{\til}{\tilde}
\newcommand{\ra}{\rightarrow}
\newcommand{\ira}{\hookrightarrow}
\newcommand{\sra}{\twoheadrightarrow}
\DeclareMathOperator\Lie{Lie}
\DeclareMathOperator\Ad{Ad}
\DeclareMathOperator\ad{ad}
\DeclareMathOperator\Tr{Tr}
\newcommand\Id{\mathrm{Id}}
\newcommand{\fb}{\mathfrak{b}}
\newcommand{\fg}{\mathfrak{g}}
\newcommand{\fh}{\mathfrak{h}}
\newcommand{\fu}{\mathfrak{u}}
\newcommand{\ft}{\mathfrak{t}}
\newcommand{\matr}[1]{\left[\begin{matrix}#1\end{matrix}\right]}
\newcommand{\quo}{/\!/}
\DeclareMathOperator\im{im}
\DeclareMathOperator\SO{SO}
\DeclareMathOperator\Sp{Sp}
\DeclareMathOperator\GL{GL}
\newcommand{\so}{\mathfrak{so}}
\newcommand{\fsp}{\mathfrak{sp}}
\renewcommand{\sp}{\mathfrak{sp}}
\newcommand{\gl}{\mathfrak{gl}}
\newcommand\semisub{{\text{ss}}}
\newcommand\semi{_\semisub}
\newcommand\semis{^\text{ss}}
\newcommand\nilpsub{{\text{nil}}}
\newcommand\nilp{_\nilpsub}
\begin{document}
\title{Jordan decompositions in Lie algebras and their duals}
\begin{abstract}
We provide a discussion of
Jordan decompositions in
the Lie algebra, and the dual Lie algebra, of a reductive group in as uniform a way as possible. We give a counterexample to the claim that Jordan decompositions on the dual Lie algebra are unique, and state an upper bound on how non-unique they can be. We also prove some Chevalley-restriction-type claims about GIT quotients for the adjoint and co-adjoint actions of \(G\).
\end{abstract}

\author{Loren Spice}
\address{2840 W.~Bowie St., Fort Worth, TX 76109}
\email{l.spice@tcu.edu}

\author{Cheng-Chiang Tsai}
\address{Institute of Mathematics, Academia Sinica, 6F, Astronomy-Mathematics Building, No. 1,
Sec. 4, Roosevelt Road, Taipei, Taiwan \vskip.2cm
also Department of Applied Mathematics, National Sun Yat-Sen University, and Department of Mathematics, National Taiwan University}

\email{chengchiangtsai@gmail.com}

\maketitle

\section{Introduction}

The notion of a Jordan decomposition in a Lie algebra is
well known among those who work with affine algebraic groups over fields.
It allows one, e.g., to reduce the study of centralisers of
arbitrary elements of a Lie algebra to centralisers of
semisimple elements, which are well behaved, and centralisers of
nilpotent elements.
Less well known is the concept of a Jordan decomposition on
a \emph{dual} Lie algebra.
This concept seems to require some restriction on the group whose
dual Lie algebra we are considering---%
the one that we impose throughout is reductivity---%
and even then some subtlety is involved.
See Remark \ref{rem:dual-Jordan} for the necessary definitions
(of semisimplicity, nilpotence, and the Jordan decomposition).

Past literature has claimed that, in a setting slightly more
restrictive than ours,
Jordan decompositions on the dual Lie algebra exist and are unique.
The existence claim is correct, but we provide a
counterexample, or rather a family of counterexamples,
to uniqueness.  See \S\ref{sec:nonuni}.
We thus regard it as useful to give a short discussion of
Jordan decompositions on the Lie algebra and the dual Lie algebra of
a reductive group in as uniform a way as possible.
This means that, though we might have to handle different
special cases differently for the two situations, we have
made statements that are the same for the Lie algebra and
the dual Lie algebra whenever possible.
We state our main result as Theorem \ref{thm:main} below.
It is a pleasure here to acknowledge our clear debt to \cite{KW76},
on which our work builds, while
correcting some statements in \cite[\S3.8 and Theorem 4]{KW76} that
are only guaranteed to be correct in large characteristic.

Theorem \ref{thm:main}\ref{uni} places an upper bound on
``how non-unique'' a Jordan decomposition can be.
This upper bound is sufficient for some purposes, but is not optimal.
See Theorem \ref{thm:uni} for a sharper bound.

Note parts \ref{iso2}, \ref{last}.
Although Chevalley restriction, and its ``dual'' analogue, can
fail in characteristic \(2\), these results show how to repair it.

\begin{theorem}\label{thm:main}
Let \(G\) be a connected reductive group over
an algebraically closed field \(k\).
Write \(\fg\) and \(\fg^*\) for the
Lie algebra and dual Lie algebra of \(G\), or
the associated vector groups.

When $V$ is an affine variety on which $G$ acts, we write $V\quo G$
for the GIT quotient $\operatorname{Spec}\cO(V)^G$.
\begin{enumerate}[label=(\roman*)]
    \item\label{key} A $G$-orbit in $\fg$ or $\fg^*$ is closed if and only if it consists of semisimple elements.
    \item\label{Jordan} Every element of \(\fg\) or \(\fg^*\) has a Jordan decomposition (though a Jordan decomposition in $\fg^*$ might not be unique; see Section \ref{sec:nonuni}).
    \item\label{uni} For every element \(X\) of \(\fg\) or \(\fg^*\), there is a unique closed orbit in the closure of the \(G\)-orbit of \(X\), and this closed orbit contains the semisimple part \(X\semi\) of every Jordan decomposition \(X = X\semi + X\nilp\).
    \item\label{ss} Consider the closures
$\overline{\fg\semis}$ and $\overline{(\fg^*)\semis}$ of
the constructible subsets of semisimple elements in
\(\fg\) and \(\fg^*\).
For every maximal torus \(T \subset G\), we have that
\(\overline{\fg\semis}\) is the sum of
\(\ft\) and
all root spaces for \(T\) in \(\fg\) corresponding to
roots that are non-zero in \(\ft^*\); and
\(\overline{(\fg^*)\semis}\) is the sum of
\((\fg^*)^T\) and
all weight spaces for \(T\) in \(\fg^*\) corresponding to
roots \(\alpha\) for which the corresponding coroots
\(h_\alpha := d\alpha^\vee(1)\) are non-zero in \(\ft\).
If the characteristic of \(k\) is not \(2\), then
\(\overline{\fg\semis}\) equals \(\fg\) and
\(\overline{(\fg^*)\semis}\) equals \(\fg^*\).
If the characteristic of \(k\) equals \(2\), then \(\overline{\fg\semis}\) is the sum of the Lie algebras of all direct factors of \(G\) that are not of type \(\Sp_{2n}\) for some positive integer \(n\), with a canonical \(\so_{2n}\) summand for each \(\Sp_{2n}\) direct factor; and \(\overline{(\fg^*)\semis}\) is the sum of the dual Lie algebras of all direct factors of \(G\) that are not of type \(\SO_{2n + 1}\) for some positive integer \(n\), with a canonical \(\so_{2n}^*\) direct summand for each \(\SO_{2n + 1}\) direct factor.
    \item\label{iso2} For every maximal torus $T\subset G$, if we put $W=N_G(T)/T$, then the natural map $\ft\quo W\ra\overline{\fg\semis}\quo G$ (respectively, $\ft^*\quo W\ra\overline{(\fg^*)\semis}\quo G$) is
an isomorphism.
    \item\label{last} The natural map $\overline{\fg\semis}\quo G\ra\fg\quo G$ (respectively, $\overline{(\fg^*)\semis}\quo G\ra\fg^*\quo G$) is bijective on $k$-points.
\end{enumerate}
\end{theorem}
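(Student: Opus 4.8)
The plan is to deduce this from part \ref{key} together with the standard separation properties of GIT quotients for a (geometrically) reductive group over an algebraically closed field; we argue for \(\fg\), the case of \(\fg^*\) being word-for-word the same (using the \(\fg^*\)-halves of parts \ref{key} and \ref{uni}). Recall that for such a group \(G\) acting on an affine \(k\)-variety \(V\), the map \(V(k)\to(V\quo G)(k)\) is surjective, and two \(k\)-points of \(V\) map to the same point of \(V\quo G\) precisely when their \(G\)-orbit closures in \(V\) intersect; moreover the closure of any orbit contains a closed orbit (for \(V=\fg\) and \(V=\fg^*\) this last point is part \ref{uni}). Applying this to \(V=\fg\) and invoking part \ref{key}, which says that the closed \(G\)-orbits in \(\fg\) are exactly the semisimple ones, we see that \((\fg\quo G)(k)\) is in bijection with the set of semisimple \(G\)-orbits in \(\fg\).

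The observation that does the work is that \(\overline{\fg\semis}\), being the closure of the \(G\)-stable constructible set \(\fg\semis\), is a \(G\)-stable \emph{closed} subvariety of \(\fg\) which contains every semisimple element. Consequently a \(G\)-orbit is closed in \(\overline{\fg\semis}\) if and only if it is closed in \(\fg\): ``only if'' because \(\overline{\fg\semis}\) is closed in \(\fg\), and ``if'' because by part \ref{key} an orbit closed in \(\fg\) is semisimple and hence lies in \(\fg\semis\subseteq\overline{\fg\semis}\). It follows by the same GIT dictionary that \(\big(\overline{\fg\semis}\quo G\big)(k)\) is also in bijection with the set of semisimple \(G\)-orbits, and that under these two identifications the natural map \(\overline{\fg\semis}\quo G\to\fg\quo G\) induced by the closed immersion \(\overline{\fg\semis}\hookrightarrow\fg\) is the identity. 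Concretely: for surjectivity, given \(y\in\fg\), part \ref{uni} puts a closed orbit \(O\) in \(\overline{G\cdot y}\), part \ref{key} makes \(O\) semisimple so that \(O\subseteq\overline{\fg\semis}\), and then any \(z\in O\) maps to the class of \(y\) since \(\overline{G\cdot z}=O\subseteq\overline{G\cdot y}\); for injectivity, if \(z,z'\in\overline{\fg\semis}\) have equal image in \(\fg\quo G\) then \(\overline{G\cdot z}\cap\overline{G\cdot z'}\ne\emptyset\) in \(\fg\), but both closures lie in the closed set \(\overline{\fg\semis}\), so this intersection is already nonempty in \(\overline{\fg\semis}\) and \(z,z'\) have equal image in \(\overline{\fg\semis}\quo G\).

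I do not anticipate a genuine obstacle here: all the content is carried by part \ref{key}, and the remainder is elementary manipulation of orbit closures. The only point I would take care to phrase correctly is that one should not expect more than a bijection on \(k\)-points; in positive characteristic the restriction \(\cO(\fg)^G\to\cO(\overline{\fg\semis})^G\) need not be surjective, since forming \(G\)-invariants is not exact for a group that is only geometrically (not linearly) reductive, so the natural morphism can genuinely fail to be an isomorphism of schemes, and the argument above is tailored to establish exactly the stated bijectivity.
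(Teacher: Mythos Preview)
Your argument is correct and is essentially the same as the paper's: both use geometric reductivity to identify \(k\)-points of \(V\quo G\) with closed \(G\)-orbits in \(V\), together with the fact (one direction of part \ref{key}) that closed orbits consist of semisimple elements and hence already lie in \(\overline{\fg\semis}\). The paper is terser and records the proof of part \ref{last} in Section \ref{sec:prep} immediately after stating the relevant GIT facts, but your more explicit unpacking of surjectivity and injectivity matches what is implicit there; your closing caveat about why one should not expect more than a bijection on \(k\)-points is also well taken and aligns with the paper's decision to state part \ref{last} only at that level.
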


There are two natural ways that
we might hope to strengthen Theorem \ref{thm:main}.
We state them as Conjecture \ref{conj:main} to draw attention to them, though Conjecture \ref{conj:main}(ii) might be questionable.
Beyond Theorem \ref{thm:main}, we do not
make progress in this paper towards
the proof (or disproof) of Conjecture \ref{conj:main}.

\begin{conjecture}\label{conj:main}\hfill
\begin{enumerate}[label=(\roman*)]
    \item\label{iso3} The map \(\overline{\fg\semis}\quo G\ra\fg\quo G\) (respectively, \(\overline{(\fg^*)\semis}\quo G\ra\fg^*\quo G\)) in Theorem \ref{thm:main}\ref{last} is a universal homeomorphism.
    \item\label{affine} The GIT quotients $\fg\quo G$ and $\fg^*\quo G$ are affine spaces.
\end{enumerate}
\end{conjecture}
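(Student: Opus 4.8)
I would treat parts~\ref{iso3} and \ref{affine} separately, in each case reducing to explicit invariant-theory computations for the simple factors of $G$ in characteristic $2$. For part~\ref{iso3}, Theorem~\ref{thm:main}\ref{iso2} identifies the source $\overline{\fg\semis}\quo G$ with $\ft\quo W$ and Theorem~\ref{thm:main}\ref{last} says the map to $\fg\quo G$ is bijective on $k$-points, so the content is to add universal injectivity. I would first note that the comparison morphism is finite with injective comorphism: injectivity holds because $\cO(\fg)^G$ is a domain while the kernel $I^G$ (with $I$ the ideal of $\overline{\fg\semis}$) is nilpotent by the surjectivity half of Theorem~\ref{thm:main}\ref{last}, and finiteness is the usual consequence of the geometric reductivity of $G$ that $\cO(\overline{\fg\semis})^G$ is module-finite over the image of $\cO(\fg)^G$. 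So the map is finite and surjective and only radiciality is at issue; for this I would reduce, via direct-product decompositions of $G$ (a central torus contributing an affine-space factor with trivial $G$-action, and products of universal homeomorphisms being universal homeomorphisms), to simple $G$. For simple $G$ in characteristic $\neq 2$, Theorem~\ref{thm:main}\ref{ss} gives $\overline{\fg\semis}=\fg$ and the map is the identity, leaving only $\Sp_{2n}$ in characteristic $2$ (for $\fg$) and $\SO_{2n+1}$ in characteristic $2$ (for $\fg^*$).

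For those remaining cases I would compute $\cO(\fg)^G$ and $\cO(\ft)^W$ directly and verify that the comorphism exhibits $\cO(\ft)^W$ as purely inseparable over $\cO(\fg)^G$; radiciality, and hence the universal-homeomorphism claim, then follows. The model is $\Sp_2=\mathrm{SL}_2$ in characteristic $2$, where $\cO(\fg)^G=k[\det]$, $\ft\quo W\cong\mathbb A^{1}$, and the map is $a\mapsto a^{2}$. This characteristic-$2$ computation for $\so_{2n+1}$, $\sp_{2n}$ and their duals, together with the pure-inseparability check, is the hard part of part~\ref{iso3}: the dual Lie algebras differ from the Lie algebras in characteristic $2$, and these invariant rings are absent from the classical literature. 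I expect the comparison map to be, in suitable coordinates, a ``partial Frobenius'' — an inclusion $\cO(\ft)^{W'}\subseteq\cO(\ft)^W$ for a larger reflection group $W'$, or a Frobenius twist on some of the basic invariants — organised by the exceptional isogeny $\Sp_{2n}\leftrightarrow\SO_{2n+1}$ in characteristic $2$, with the ``canonical $\so_{2n}$ summand'' of Theorem~\ref{thm:main}\ref{ss} as its shadow on the quotients.

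For part~\ref{affine} I would first reformulate. The scaling action of $\Gm$ makes $\cO(\fg)^G$ and $\cO(\fg^*)^G$ finitely generated graded $k$-domains with degree-$0$ part $k$, and such a ring is polynomial — equivalently, its $\Spec$ is an affine space — if and only if it is regular at its irrelevant ideal, equivalently if and only if $\fg\quo G$ (resp.\ $\fg^*\quo G$) is smooth, graded Nakayama then supplying a homogeneous regular system of parameters that generates the ring. So part~\ref{affine} is the smoothness of $\fg\quo G$ and $\fg^*\quo G$, and this does not follow from part~\ref{iso3} together with smoothness of $\ft\quo W$, since a finite radicial universal homeomorphism onto a smooth target can have a singular source (the normalisation of a cuspidal cubic). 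So a direct argument is needed. Reducing to simple factors as before, Theorem~\ref{thm:main}\ref{iso2} and \ref{ss} identify $\fg\quo G$ with $\ft\quo W$ away from $\Sp_{2n}$ (resp.\ $\SO_{2n+1}$ for $\fg^*$) in characteristic $2$, so there the claim becomes polynomiality of the modular invariants $\cO(\ft)^W$ of a Weyl group acting on the mod-$p$ reduction of the cocharacter lattice of $T$; this holds at non-torsion primes by Shephard--Todd, and at torsion primes one would adapt Demazure's analysis of the analogous character-lattice invariants to the lattices occurring here — and this is precisely where I would expect part~\ref{affine} to fail for some isogeny types.

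For the factors left over, $\Sp_{2n}$ and $\so_{2n+1}^{*}$ in characteristic $2$, I would either compute $\cO(\fg)^G$ outright and check polynomiality, or — more structurally — construct a Kostant-type section $\fg\quo G\to\fg$ into the regular locus out of a regular nilpotent and an associated cocharacter and show it is an isomorphism onto an affine subspace; the obstacle there is that the $\mathfrak{sl}_2$-triple underlying the usual construction of such a section misbehaves in characteristic $2$ and must be replaced by something tailored to these groups. So the overall obstacle to part~\ref{affine} is twofold: the modular invariant theory of Weyl groups at torsion primes, where the statement may simply be false, and the characteristic-$2$ invariant theory of $\sp_{2n}$ and $\so_{2n+1}^{*}$.
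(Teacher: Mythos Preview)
The statement you are addressing is \emph{Conjecture}~\ref{conj:main}, not a theorem, and the paper explicitly does not prove it: immediately after stating it the authors write that ``Conjecture~\ref{conj:main}(ii) might be questionable'' and that ``Beyond Theorem~\ref{thm:main}, we do not make progress in this paper towards the proof (or disproof) of Conjecture~\ref{conj:main}.'' There is therefore no proof in the paper to compare your proposal against.

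Your proposal is, accordingly, not a proof but a strategy outline, and you yourself acknowledge this: for part~\ref{iso3} you identify the characteristic-$2$ invariant-ring computations for $\sp_{2n}$ and $\so_{2n+1}^*$ as ``the hard part'' without carrying them out, and for part~\ref{affine} you note that the modular Weyl-group invariant theory at torsion primes ``is precisely where I would expect part~\ref{affine} to fail for some isogeny types,'' which is consistent with the authors' own caveat. The reductions you sketch (to simple factors, and to the $\Sp_{2n}$/$\SO_{2n+1}$ cases in characteristic $2$ via Theorem~\ref{thm:main}\ref{ss}) are sound and parallel the paper's reduction in \S\ref{sec:reduction}; your injectivity-and-finiteness argument for the comparison map in part~\ref{iso3} is correct; and your reformulation of part~\ref{affine} as regularity of a positively graded finitely generated domain at its irrelevant ideal is standard and correct. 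But the substantive computations that would settle either part are not supplied, so what you have written is a plan of attack rather than a proof --- which is exactly the status the paper assigns to the statement.
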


We provide a brief overview of our paper.
In Section \ref{sec:prep}, we fix some notation and definitions,
and use geometric reductivity to prove
Theorem \ref{thm:main}\ref{Jordan} and \ref{last}, and
Lemma \ref{lem:orbit}, which is a weaker version of
Theorem \ref{thm:main}\ref{uni} (and will allow us to deduce the latter from
Theorem \ref{thm:main}\ref{key}).
In Section \ref{sec:Jordan-rems}, we
make a few remarks justifying our definitions
(which are those of \cite{KW76})
of semisimple and nilpotent elements, and Jordan decompositions, and
comparing them to those of \cite{CM93}.
In Section \ref{sec:reduction}, we explain why,
to complete the proof of Theorem \ref{thm:main}, it
suffices to consider
one ``bad'' family of adjoint actions, and
one ``bad'' family of co-adjoint actions, both
in characteristic \(2\).
(The ``bad'' family of co-adjoint actions is precisely the one
identified as problematic, and so excluded, in \cite{KW76}.)
We include two arguments to handle these families,
in Sections \ref{sec:first} and \ref{sec:second},
because we think that they provide interestingly different perspectives.
In Section \ref{sec:nonuni}, we give a
small family of counterexamples of
Jordan decompositions in \(\fg^*\) that are not unique; and then,
in Section \ref{sec:uni}, we upgrade
Theorem \ref{thm:main}\ref{uni} to Theorem \ref{thm:uni}, which
shows that the counterexamples in Section \ref{sec:nonuni} are
the only possibilities.
Theorem \ref{thm:uni} recovers the uniformity lost
in our statement of
Theorem \ref{thm:main}\ref{Jordan}, in the sense that it is
a result on \(\fg^*\) that describes the possible
non-uniqueness of Jordan decompositions there, but
whose analogue on \(\fg\) implies that Jordan decompositions there are
unique.
Finally, in
Section \ref{sec:nilp-or-semi-is-nilp-or-semi}, we show that
the only Jordan decompositions of
semisimple or nilpotent elements are
the obvious, trivial ones, so that the non-uniqueness of
Section \ref{sec:nonuni} is not an issue in this special case.

\section{Notation and preparation}\label{sec:prep}

The following notation is mostly as in
Theorem \ref{thm:main}, but we introduce it here so as to be able to
refer to it throughout the paper.

Let $k$ be an algebraically closed field. Let $G$ be a connected reductive group over $k$, $\fg$ its Lie algebra, and $\fg^*$ its dual Lie algebra.
In general, if we denote a group by an uppercase Roman letter, then
we use the corresponding lowercase Fraktur letter for its Lie algebra, so,
for example, once we have groups \(T\) and \(B\), we will
denote their Lie algebras by \(\ft\) and \(\fb\).
We may also write \(\Lie(\cdot)\) in place of the Fraktur notation when
it improves readability.

When $V$ is an affine variety on which $G$ acts, we write $V\quo G$
for the GIT quotient $\operatorname{Spec}\cO(V)^G$.
We will sometimes also use \(\fg\) and \(\fg^*\) for the associated vector groups, so that the notations \(\fg\quo G\) and \(\fg^*\quo G\) make sense.

By elements and points in a scheme over $k$ we refer to the $k$-points unless otherwise specified. Centralizers (such as $C_G(X)$ for an element $X$ of $G$, $\fg$ or $\fg^*$) and centers (such as $Z(G)$) will however always mean the schematic centralizers and centers.

Fix a maximal torus \(T\subset G\).
The inclusion of the \(T\)-fixed subspace \((\fg^*)^T\) of \(\fg^*\) is a splitting of the quotient \(\fg^*\sra\ft^*\), which we regard as an inclusion of \(\ft^*\) in \(\fg^*\).
Then \(\ft^*\) is the subspace of \(\fg^*\) that
vanishes on the unique \(T\)-stable complement to
\(\ft\) in \(\fg\).

Write \(\Phi(G, T)\) for the set of roots of \(T\) in \(\fg\), and
\(\Phi^\vee(G, T)\) for the dual root system.
For every \(\alpha \in \Phi(G, T) \subset X^*(T)\), we have
an element
\(\alpha^\vee \in \Phi^\vee(G, T)\) of
\(X_*(T)\), and
an element \(h_\alpha := d\alpha^\vee(1)\) of \(\ft\)
(which is identified \textit{via} the evaluation isomorphism
\(X_*(T) \otimes_{\mathbb Z} k \ra \ft\) with
\(\alpha^\vee \otimes_{\mathbb Z} 1\)).
We will call both `coroots', and rely on context, or
explicit use of the notation \(\alpha^\vee\) or \(h_\alpha\), to clarify.

Let \(B\) be a Borel subgroup of \(G\) containing \(T\) and \(U\) the unipotent radical of \(B\), and
write \(\fb\) and \(\fu\) for their Lie algebras.


Remark \ref{rem:dual-Jordan} re-states
the definition \cite[\S3, p.~140]{KW76} of a Jordan decomposition in \(\fg^*\)
in an equivalent form that is the one that
we will use throughout the paper.

\begin{remark}
\label{rem:dual-Jordan}
Let \(X\) be an arbitrary element of \(\fg^*\).
By definition, an element \(X\semi \in \fg^*\) is
semisimple if and only if
\(C_G(X\semi)^\circ\) contains a maximal torus in \(G\).
Then \cite[Lemma 3.1ii)]{KW76} says that
\(X\semi\) is the semisimple part of
a Jordan decomposition of \(X\) if and only if, for some
choice of maximal torus \(T\) in \(C_G(X\semi)^\circ\) and
Borel subgroup \(B\) of \(G\) containing \(T\),
we have that
\(X\) agrees with \(X\semi\) on \(\fb\),
and
\(X\) is trivial on the unique \(T\)-stable complement to
\(\Lie(C_G(X\semi)^\circ)\) in \(\fg\).
(Note that the complement might
depend on the choice of \(T\); see
\cite[Remark 3.2]{Spi21}.)

In this perspective, we say that
an element \(X\) is nilpotent if and only if
it has a Jordan decomposition with trivial semisimple part,
i.e., if and only if
there is some Borel subgroup \(B\) of \(G\) such that
\(X\) vanishes on \(\fb\).
\end{remark}

Since the notion of a Jordan decomposition on the Lie-algebra side is defined for any algebraic group, not just a reductive one, we may apply \cite[Proposition 11.8]{Bo} with \(B\) in place of \(G\) to conclude that
every element of \(\fb\) admits a Jordan decomposition whose semisimple part is tangent to some maximal torus in \(B\), hence belongs to some \(B(k)\)-conjugate of \(\ft\).
The analogous statement for \(\fg^*\), that
every element of \((\fg/\fb)^*\) has a Jordan decomposition whose semisimple part belongs to some \(B(k)\)-conjugate of \(\ft^*\), is proven in
\cite[\S3.8]{KW76}
by an abstract computation with root systems and Chevalley constants that
carries through unchanged even when we drop the restriction in
\cite[\S3]{KW76} that \(G\) be almost simple, and
not of type \(\SO_{2n + 1}\) if the characteristic of \(k\) equals \(2\).
By \cite[Proposition 14.25]{Bo} for \(\fg\), respectively \cite[Proposition 3.3]{Spi21} for \(\fg^*\), every orbit of \(G\) meets \(\fb\), respectively \((\fg/\fb)^*\), so we obtain the analogous fact for the orbits of arbitrary elements of \(\fg\), respectively \(\fg^*\).
This proves Theorem \ref{thm:main}\ref{Jordan}.
If \(X\) is an element of \(\fg\) or \(\fg^*\) with
Jordan decomposition \(X = X\semi + X\nilp\), then
\(X\semi\) is contained in the closure of the \(G\)-orbit of \(X\)
(by \cite[Proposition 3.8]{Spi21} for \(\fg^*\), and
an analogous argument for \(\fg\)), so
a closed orbit of \(G\) in \(\fg\) or \(\fg^*\) already
consists of semisimple elements.

Since $G$ is reductive, and therefore geometrically reductive
\cite[Theorem 5.2]{Hab75},
we have for every finite-type, affine \(G\)-variety \(V\) that \(V\quo G\) is again of finite type \cite[p.~372, Main Theorem]{Nag64}; that every orbit closure contains a unique closed orbit; and that closed orbits through points in \(V(k)\) correspond bijectively to points in \((V\quo G)(k)\). This proves Theorem \ref{thm:main}\ref{last}, and that \ref{uni}
follows from \ref{key}.
It also proves a weaker version of
Theorem \ref{thm:main}\ref{iso2}, namely, that
the maps occurring there are
bijective on \(k\)-points.
Our discussion has also established Lemma \ref{lem:orbit}, which is
a weaker version of Theorem \ref{thm:main}\ref{uni}.

\begin{lemma}\label{lem:orbit} Every closure of a \(G\)-orbit in $\fg^*$ contains a unique closed orbit, which consists of semisimple elements.
\end{lemma}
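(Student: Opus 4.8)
The plan is to observe that Lemma \ref{lem:orbit} follows almost immediately from the geometric-reductivity discussion already carried out just before its statement, combined with the description of nilpotent and semisimple elements in Remark \ref{rem:dual-Jordan}. Since \(G\) is reductive, hence geometrically reductive by \cite[Theorem 5.2]{Hab75}, the general theory of GIT quotients for geometrically reductive groups acting on affine varieties (via \cite[p.~372, Main Theorem]{Nag64}) gives that, for the affine \(G\)-variety \(\fg^*\), every orbit closure contains a unique closed orbit. So the only content left to verify is that this unique closed orbit consists of semisimple elements.

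For that, I would argue in two steps. First, by Theorem \ref{thm:main}\ref{Jordan} (already proven in the excerpt via \cite[\S3.8]{KW76} and \cite[Proposition 3.3]{Spi21}), the element \(X\) has a Jordan decomposition \(X = X\semi + X\nilp\), and by \cite[Proposition 3.8]{Spi21} the semisimple part \(X\semi\) lies in the closure \(\overline{G\cdot X}\). Second, I claim the \(G\)-orbit of \(X\semi\) is closed: indeed, \(C_G(X\semi)^\circ\) contains a maximal torus \(T\) of \(G\) by definition of semisimplicity (Remark \ref{rem:dual-Jordan}), and one checks that the orbit of a semisimple element is closed — this is exactly the statement of Theorem \ref{thm:main}\ref{key}, but for the purposes of this \emph{weaker} lemma it is cleaner to note only that \emph{some} closed orbit sits inside \(\overline{G\cdot X}\) and that it must therefore coincide with the one containing \(X\semi\); alternatively, one can cite that a closed orbit consists of semisimple elements (established in the paragraph preceding Theorem \ref{thm:main}\ref{Jordan}), so the unique closed orbit in \(\overline{G\cdot X}\), whatever it is, automatically consists of semisimple elements, and that already proves the lemma without needing to identify it with the orbit of \(X\semi\).

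So the cleanest route is: (1) invoke geometric reductivity to get existence and uniqueness of a closed orbit \(\mathcal{O}_0 \subset \overline{G \cdot X}\); (2) invoke the already-established fact that \emph{any} closed \(G\)-orbit in \(\fg^*\) consists of semisimple elements. Combining (1) and (2) finishes the proof with essentially no new work. If one wants the slightly stronger bookkeeping that \(\mathcal{O}_0\) contains \(X\semi\) — which is what Theorem \ref{thm:main}\ref{uni} will eventually assert — one additionally notes \(X\semi \in \overline{G\cdot X}\) and that \(\overline{G\cdot X\semi}\) contains a closed orbit which by uniqueness must be \(\mathcal{O}_0\); but for Lemma \ref{lem:orbit} as stated this refinement is not required.

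I do not anticipate a genuine obstacle here: every ingredient (existence/uniqueness of closed orbit from geometric reductivity; closed orbits are semisimple) has already been assembled in the preceding paragraphs, so the ``proof'' is really just the act of naming Lemma \ref{lem:orbit} and pointing to those facts. The only mild subtlety is resisting the temptation to prove the converse direction (closed \(\Leftrightarrow\) semisimple) here, since that full equivalence is Theorem \ref{thm:main}\ref{key} and is deferred; Lemma \ref{lem:orbit} deliberately asks only for the easy implication, which is why the text flags it as ``a weaker version of Theorem \ref{thm:main}\ref{uni}''.
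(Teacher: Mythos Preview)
Your proposal is correct and matches the paper's own argument essentially verbatim: the paper likewise just points to geometric reductivity for the unique closed orbit and to the already-established fact that closed orbits consist of semisimple elements, declaring that ``our discussion has also established Lemma \ref{lem:orbit}''. Your reading of the lemma as deliberately avoiding the converse (closed $\Leftrightarrow$ semisimple) is also exactly right.
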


\section{Remarks on Jordan decompositions}
\label{sec:Jordan-rems}

We hope that the reader will agree that
the notion of a Jordan decomposition on \(\fg\),
especially its extremely strong connection to
the classical Jordan decomposition in matrix algebras \(\gl_n\) coming from
a choice of faithful representation \(G \ra \GL_n\),
is sensible.
Granting that, Remark \ref{rem:Killing} argues that
the notion of a Jordan decomposition on \(\fg^*\) is also sensible
because any \(G\)-equivariant identification of \(\fg\) with \(\fg^*\)
identifies it with the notion of a Jordan decomposition on \(\fg\).

\begin{remark}
\label{rem:Killing}
Theorem \ref{thm:main}\ref{uni} shows that
the semisimple elements of \(\fg^*\) are
precisely those whose \(G\)-orbits are closed, and
Remark \ref{rem:nilp-is-nilp} below
(and \cite[Proposition 3.8]{Spi21}) show that
the nilpotent elements of \(\fg^*\) are
precisely those that contain \(0\) in
the closure of their \(G\)-orbits.
The analogues for \(\fg\) of these two facts are
\cite[Corollary 3.6]{SS70} and
\cite[Theorem 10.6(4) and Proposition 14.25]{Bo}.
(Actually, \cite[Corollary 3.6]{SS70} is stated only for
semisimple groups.
We were not able to find a reference for reductive groups other than
our Theorem \ref{thm:main}\ref{key}.)
In particular, if there is a \(G\)-equivariant isomorphism
of \(\fg\) with \(\fg^*\), then
it identifies the semisimple, respectively nilpotent,
elements in the two sets.
Further, if \(X\semi \in \fg\) is semisimple with image
\(X^*\semi \in \fg^*\), we have that
\(C_G(X\semi)\) equals \(C_G(X^*\semi)\), and,
for every maximal torus \(T\) fixing \(X\semi\),
the spaces \(\Lie(C_G(X\semi))\) and
\((\fg/{{}_T{\Lie(C_G(X^*\semi))^\perp}})^*\) are identified, where
\({}_T{\Lie(C_G(X^*\semi)^\circ)^\perp}\) is
the unique \(T\)-stable complement to
\(\Lie(C_G(X^*\semi)^\circ)\) in \(\fg\).
This shows that such an isomorphism also
identifies Jordan decompositions of elements of
\(\fg\) and \(\fg^*\).
\end{remark}

Remark \ref{rem:KW-vs-CM} discusses the compatibility (or in-)
of our definition of semisimplicity and nilpotence with
a definition that is sometimes used in characteristic \(0\).

\begin{remark}
\label{rem:KW-vs-CM}
In \cite[\S1.3]{CM93}, alternate definitions are given for
semisimplicity and nilpotence in \(\fg^*\).
Those definitions are given only over \(\mathbb C\); here
we discuss to what extent they carry over to arbitary fields, and
their relation to our notions.

The definition of semisimplicity involves some choices that are
particular to the case where \(k\) has characteristic \(0\), for example
a close identification between groups and Lie algebras, and
the fact that, in that case, all affine algebraic \(k\)-groups are smooth.
One reasonable characteristic-free re-interpretation of it is that
\(X^*\) is semisimple if and only if
\(C_G(X^*)^\circ\) is (smooth and) reductive.
Semisimplicity in the sense of \cite{KW76} implies
semisimplicity in this sense, by \cite[Lemma 3.1iii)]{KW76}.
We do not know whether the reverse implication holds.

The definition of nilpotence in \cite[\S1.3]{CM93} is that
an element \(X^* \in \fg^*\) is
nilpotent if and only if it is trivial on
\(\Lie(C_G(X^*))\).
In small characteristic, nilpotence in the sense of \cite{KW76} can
fail to imply nilpotence in the sense of \cite{CM93},
and \textit{vice versa}.
For example, suppose the characteristic of \(k\) is \(2\).
The element
\(X^* : \begin{pmatrix} a & b \\ c & -a \end{pmatrix} \mapsto c\)
of \(\mathfrak{sl}_2^*\) is nilpotent in the sense of \cite{KW76}; but
\(\Lie(C_G(X^*))\) equals \(\mathfrak{sl}_2\), while
\(X^*\) is not trivial on all of \(\Lie(C_G(X^*))\), and hence
not nilpotent in the sense of \cite{CM93}.
On the other hand, the element
\(X^* : \begin{bmatrix} a & b \\ c & d \end{bmatrix} \mapsto a + c + d\)
of \(\mathfrak{pgl}_2^*\) has the property that
\(\Lie(C_G(X^*))\) is
the strictly upper-triangular subalgebra of \(\fg\), on which
\(X^*\) is trivial, so that \(X^*\) is nilpotent in the sense of \cite{CM93};
but
\(X^*\) has a Jordan decomposition with semisimple part
the trace map, and hence,
by Remark \ref{rem:nilp-is-nilp} below,
is not nilpotent in the sense of \cite{KW76}.
\end{remark}

\begin{lemma}
\label{lem:CM-nilp-is-KW-nilp}
If, for every full-rank, reductive subgroup \(H\) of \(G\),
restriction to \(\Lie(Z(H))\) provides an injection
\((\fh^*)^H \ra \Lie(Z(H))\),
then
nilpotence in the sense of \cite{CM93} implies
nilpotence in the sense of \cite{KW76}.
\end{lemma}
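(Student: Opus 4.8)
We sketch how we would prove Lemma \ref{lem:CM-nilp-is-KW-nilp}.

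The plan is to show that if an element \(X^* \in \fg^*\) is nilpotent in the sense of \cite{CM93}, then the semisimple part of \emph{every} Jordan decomposition \(X^* = X^*\semi + X^*\nilp\) in the sense of \cite{KW76} is zero. Such a decomposition exists by Theorem \ref{thm:main}\ref{Jordan}, and having \(X^*\semi = 0\) in it is, by definition (Remark \ref{rem:dual-Jordan}), exactly the assertion that \(X^*\) is nilpotent in the sense of \cite{KW76}.

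So fix a Jordan decomposition \(X^* = X^*\semi + X^*\nilp\) and set \(H := C_G(X^*\semi)^\circ\). By Remark \ref{rem:dual-Jordan} we may choose a maximal torus \(S\) of \(H\) and a Borel subgroup \(B' \supseteq S\) of \(G\) for which \(X^*\) agrees with \(X^*\semi\) on \(\fb' := \Lie(B')\) and \(X^*\) vanishes on the unique \(S\)-stable complement \(\fm\) to \(\fh\) in \(\fg\). Because \(X^*\semi\) is semisimple, \(H\) is of full rank --- so \(S\) is in fact a maximal torus of \(G\), and \(\fm\) is a sum of nonzero-weight spaces of \(S\) --- and \(H\) is reductive by \cite[Lemma 3.1iii)]{KW76} (cf.\ Remark \ref{rem:KW-vs-CM}); thus \(H\) is one of the subgroups to which the hypothesis applies. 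Since \(H\), hence \(S\), fixes \(X^*\semi\) under the coadjoint action, \(X^*\semi \in (\fg^*)^S\), so \(X^*\semi\) vanishes on every nonzero-weight space of \(S\) in \(\fg\), in particular on \(\fm\); together with \(X^*|_\fm = 0\) and \(X^*|_{\fb'} = X^*\semi|_{\fb'}\) this shows that \(X^*\nilp\) vanishes on both \(\fm\) and \(\fb'\).

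The key step is then to prove \(Z(H) \subseteq C_G(X^*)\). Since \(Z(H)\) fixes \(X^*\semi\), it suffices to see that it fixes \(X^*\nilp\); and for \(z \in Z(H)\) the operator \(\Ad(z)\) is the identity on \(\fh\) (conjugation by \(z\) is trivial on \(H\)) and it preserves \(\fm\) (because \(z\) centralises \(S\), so \(\Ad(z)\) commutes with the \(S\)-action and hence fixes the unique \(S\)-stable complement to \(\fh\) in \(\fg\)); since \(\fg = \fh \oplus \fm\) and \(X^*\nilp|_\fm = 0\), this gives \(X^*\nilp \circ \Ad(z)^{-1} = X^*\nilp\). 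Differentiating yields \(\Lie(Z(H)) \subseteq \Lie(C_G(X^*))\). On the other hand \(Z(H) \subseteq C_H(S) = S\), so \(\Lie(Z(H)) \subseteq \fs \subseteq \fb'\) and hence \(X^*\nilp\) vanishes on \(\Lie(Z(H))\); while \(X^*\) vanishes on \(\Lie(Z(H)) \subseteq \Lie(C_G(X^*))\) by the assumed nilpotence in the sense of \cite{CM93}. Subtracting, \(X^*\semi\) vanishes on \(\Lie(Z(H))\).

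To finish, restriction \(\fg^* \to \fh^*\) is \(H\)-equivariant, so \(X^*\semi|_\fh \in (\fh^*)^H\), and this element is trivial on \(\Lie(Z(H))\); by the hypothesis of the lemma, \(X^*\semi|_\fh = 0\). Since \(X^*\semi \in (\fg^*)^S\) also vanishes on every nonzero-weight space of \(S\), and now on \(\fs \subseteq \fh\) as well, we conclude \(X^*\semi = 0\), as required. The one subtle point is the containment \(Z(H) \subseteq C_G(X^*)\) --- in particular the scheme-theoretic assertions that, for \(z\) in the possibly non-smooth group scheme \(Z(H)\), \(\Ad(z)\) is the identity on \(\fh\) and preserves \(\fm\); the remaining steps are routine bookkeeping with weight-space decompositions and with the two notions of nilpotence.
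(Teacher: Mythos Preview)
Your proof is correct and follows essentially the same route as the paper's: pick a Jordan decomposition, set \(H = C_G(X^*\semi)^\circ\), show that \(Z(H)\) fixes \(X^*\) so that \(\Lie(Z(H)) \subset \Lie(C_G(X^*)) \subset \ker X^*\), use \(\Lie(Z(H)) \subset \fs \subset \fb'\) to deduce that \(X^*\semi\) vanishes on \(\Lie(Z(H))\), and then invoke the injectivity hypothesis to kill \(X^*\semi\). The only real difference is cosmetic: the paper argues directly that \(Z(H)\) fixes \(X^*\) (via the \(H\)-equivariant identification \((\fg/{}_T\fh^\perp)^* \cong \fh^*\)), whereas you split \(X^* = X^*\semi + X^*\nilp\) and check separately that \(Z(H)\) fixes each piece---your version has the advantage of spelling out clearly why \(\Ad(z)\) preserves \(\fm\) and is the identity on \(\fh\), which is exactly the scheme-theoretic point you flag as subtle.
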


\begin{proof}
Suppose that
\(X^* \in \fg^*\) is nilpotent in the sense of \cite{CM93}.
By Theorem \ref{thm:main}\ref{Jordan} and
Remark \ref{rem:dual-Jordan}, there are
a maximal torus \(T\) in \(G\),
a Borel subgroup \(B\) of \(G\) containing \(T\), and
an element
\(X^*\semi \in \ft^*\) such that
\(X^*\) agrees with \(X^*\semi\) on \(\fb\), and,
if we put \(H = C_G(X^*\semi)^\circ\), then
\(X^*\) is trivial on the unique \(T\)-stable complement
\({}_T\fh^\perp\) to \(\fh\) in \(\fg\).
Since restriction to \(\fh\) provides
an \(H\)-equivariant isomorphism
\((\fg/{}_T\fh^\perp)^* \ra \fh^*\), we have in particular that
\(X^*\) is fixed by \(Z(H)\), hence annihilated by \(\Lie(Z(H))\).
That is, \(\Lie(Z(H))\) is contained in \(\Lie(C_G(X^*))\), hence in
\(\ker(X^*)\); but
\(\Lie(Z(H))\) is also contained in \(\ft\), hence in \(\fb\), so
\(X^*\semi\) agrees with \(X^*\) on \(\Lie(Z(H))\), hence
vanishes there.
Since \(X^*\semi\) belongs to \((\fh^*)^H\), it is
trivial by our injectivity assumption, so \(X^*\) is nilpotent
in the sense of \cite{KW76}.
\end{proof}

Remark \ref{rem:from-full-rank} shows that
Jordan decompositions may be inflated to \(\fg^*\) from
the dual Lie algebras of
full-rank, reductive subgroups of \(G\).

\begin{remark}
\label{rem:from-full-rank}
Let \(H\) be a full-rank, reductive subgroup of \(G\),
\(X^*_H\) an element of \(\fh^*\), and
\(X^*_H = X^*_{H\,\semisub} + X^*_{H\,\nilpsub}\)
a Jordan decomposition in \(\fh^*\).
Then there are a maximal torus \(T\) in \(H\) and
a Borel subgroup \(B_H\) of \(H\) containing \(T\) such that
\(X^*_H\) agrees with \(X^*_{H\,\semisub}\) on \(\fb_H\),
\(X^*_{H\,\semisub}\) is fixed by \(T\), and
\(X^*_H\) is trivial on
the unique \(T\)-stable complement
\({}_T{\Lie(C_H(X^*_{H\,\semisub}))}^\perp\) to
\(\Lie(C_H(X^*_{H\,\semisub}))\) in
\(\fh\).
Write \({}_T\fh^\perp\) for the unique \(T\)-stable complement to
\(\fh\) in \(\fg\), and
\(X^*\) for the extension of \(X^*_H\) trivially across \({}_T\fh^\perp\); and
similarly for \(X^*\semi\) and \(X^*\nilp\).
Let \(B\) be a Borel subgroup of \(G\) containing \(B_H\).
Then \(X^*\semi\) is fixed by \(T\);
since \(\fb_H + {}_T\fh^\perp\) contains \(\fb\), we have that
\(X^*\) agrees with \(X^*\semi\) on \(\fb\); and,
since \({}_T{\Lie(C_H(X^*_{H\,\semisub}))}^\perp + {}_T\fh^\perp\) contains
the unique \(T\)-stable complement to
\(\Lie(C_G(X^*\semi))\) in \(\fg\), we have that
\(X^*\) is trivial on that complement.
That is, \(X^* = X^*\semi + X^*\nilp\) is
a Jordan decomposition of \(X^*\).
\end{remark}

\section{Reduction to $\sp_{2n}$ and $\so_{2n+1}^*$}\label{sec:reduction}

We show that Theorem \ref{thm:main} and Conjecture \ref{conj:main}\ref{iso3} can be reduced to the case where
\(k\) has characteristic \(2\), and
we are considering either
the adjoint action of \(\Sp_{2n}\)
(which we shall denote by $\Sp_{2n}\curvearrowright\sp_{2n}$) or
the coadjoint action of \(\SO_{2n + 1}\)
(which we shall denote by $\SO_{2n+1}\curvearrowright\so_{2n+1}^*$)
for some positive integer \(n\).

As observed in \cite[p.~83]{Jan04},
the proof in \cite[II.3.17\textquotesingle]{SS70} that
\(\ft\quo W \ra \fg\quo G\) is
an isomorphism, stated there under the assumption that
\(G\) is adjoint, actually requires only that
no root of \(T\) on \(\fg\) is \(0\) when viewed as an element of \(\ft^*\).
We will show how to handle the opposite case at the end of this section.

The results of \cite[Theorem 4]{KW76} are stated under the assumptions that
\(G\) is almost simple (imposed in \cite[\S1, p.~136]{KW76}), and that
\(p\) does not equal \(2\) or
\(G\) is not of type \(\SO_{2n + 1}\) for any positive integer \(n\)
(imposed in \cite[\S3, p.~140]{KW76}).
For any reductive group \(G\), there is an isogeny onto \(G\) from
the product of the central torus of \(G\) and its almost-simple factors, but
the derivative can have a kernel for small \(p\).
Thus, it is not a formal argument to remove the first assumption, but
we describe how to carry it out below.
Removing the second requires a change to the statement of
\cite[Theorem 4]{KW76}.
In fact, we have counterexamples to some of
the intermediate results of \cite{KW76} even in
their setting.
We describe the situation below.

For every \(\alpha \in \Phi(G, T)\), we have an element
\(h_\alpha := d\alpha^\vee(1)\) of \(\ft\).
Suppose for the moment that all these elements are non-zero.
This is precisely what is needed to ensure that
the set of
regular (in \(\fg^*\)) semisimple elements of \(\ft^*\), i.e.,
semisimple elements that do not lie on any coroot hyperplane,
denoted by \(\Omega\) in \cite[\S3, p.~140]{KW76}, is
dense and open in \(\ft^*\).
Then the arguments of \cite[\S3]{KW76} go through unchanged, with
two exceptions.
First, the statement in \cite[Lemma 3.1i)]{KW76} that
\(\Phi(C_\fg(X^*), T)\) is closed
in \(\Phi(G, T)\) for every \(X^* \in \ft^*\)
can fail in small characteristic.
Second,
the claim in \cite[\S3.8i)]{KW76} that
a certain map from
a quotient of a Borel subgroup to
affine space is an isomorphism
can also fail.
See Section \ref{sec:nonuni} for an example of this behaviour
(with \(G = \Sp_4\) in characteristic \(2\)), where we also show that
the containment of connected centralizers asserted in
\cite[\S3.8ii)]{KW76}, and
the uniqueness asserted in \cite[Theorem 4iv)]{KW76}, can
fail.

Thus, we must remove uniqueness in
\cite[Theorem 4iv)]{KW76}, and discard
\cite[\S3.8]{KW76} entirely.
Otherwise, aside from \cite[\S3.8]{KW76} and \cite[Theorem 4iv)]{KW76},
the rest of the proof of \cite[Theorem 4]{KW76} is valid
with no restriction on \(G\) beyond
every coroot \(h_\alpha\) being non-zero.

We are left with the case when there is some
\(\alpha \in \Phi(G, T)\) such that \(h_\alpha \in \ft\) equals \(0\), and thus the image \(\chi(h_\alpha)\) in $k$ of $\langle\chi,\alpha^{\vee}\rangle$ is $0$ for every $\chi\in X^*(T)$.
In particular, since \(\langle\alpha, \alpha^\vee\rangle\) equals \(2\),
the characteristic of \(k\) is $2$; and
there is no root \(\beta\) such that
the integer \(\langle\beta, \alpha^\vee\rangle\) is odd.
By inspection of the Dynkin diagrams,
this means that
the component of \(\Phi(G, T)\) containing \(\alpha\) is
of type \(\mathsf B_n\) for some \(n\), possibly \(n = 1\).
If the corresponding almost-simple factor $G'$ of \(G\) were
simply connected, then $\alpha\in\Phi(G',T')$ (where $T':=T\cap G'$) would not be divisible by $2$ in $X_*(T')$ since any root is indivisible in the root lattice.
In particular, \(h_\alpha\) would be non-zero in \(\fg'\ira\fg\), a contradiction.
Thus the factor is actually adjoint, i.e., is
a \textbf{direct} factor of \(G\) of type \(\operatorname{SO}_{2n + 1}\).

We now show that, similarly, the failure of the
Chevalley restriction theorem (for \(\fg\)) can be attributed to
the presence of symplectic-group direct factors.
Thus, suppose instead that there is some
\(\alpha \in \Phi(G, T)\) whose differential
\(d\alpha\), viewed as an element of \(\ft^*\), equals \(0\).
If we write \(G^*\) for
the dual group of \(G\) (viewed as a \(k\)-group, not a \(\mathbb C\)-group) and
\(T^*\) for the maximal torus of \(G^*\) corresponding to \(T\), then
the preceding argument shows that
there is some positive integer \(n\) such that
\(G^*\) admits \(\SO_{2n + 1}\) as a direct factor with
\(\alpha \in \Phi^\vee(\SO_{2n + 1}, T^*)\).
It follows that
\(G\) admits \(\SO_{2n + 1}^* = \Sp_{2n}\) as a direct factor with
\(\alpha \in \Phi(\Sp_{2n}, T)\).

\section{First approach}
\label{sec:first}

Suppose \(k\) has characteristic \(2\).
We work with the case $G=\SO_{2n+1}\curvearrowright\so_{2n+1}^*$; the case of $\Sp_{2n}\curvearrowright\sp_{2n}$ is almost identical except for being easier at some places.

As in Section \ref{sec:prep},
fix a maximal torus \(T\) in \(\SO_{2n + 1}\), and
identify \(\ft^*\) with
the \(T\)-fixed subspace of $\so_{2n+1}^*$. There is an isogeny $f:\SO_{2n+1}\ra\Sp_{2n}$ that induces an isomorphism of \(T\) onto a maximal torus in \(\Sp_{2n}\), and sends the short roots of $\Sp_{2n}$ to the long roots of $\SO_{2n+1}$, but sends the long roots of $\Sp_{2n}$ to twice the short roots of $\SO_{2n+1}$, in the sense of \cite[Theorem 9.6.5]{Spr09}.

If \(v_0 \in \ft^*\) lies in the complement of
all fixed subspaces of Weyl elements that
do not act trivially on \(\ft^*\), and satisfies
\(v_0(h_\alpha) \ne 0\) for all \(\alpha \in \Phi(\SO_{2n + 1}, T)\) such that
\(h_\alpha\) is non-\(0\) as an element of \(\ft\),
then
\(C_{\SO_{2n + 1}}(v_0)\) equals \(C_{\SO_{2n + 1}}(\ft^*)\).
Since the subgroup of \(W(\SO_{2n + 1}, T)(k)\) that
acts trivially on \(\ft^*\) is
generated by reflections in
the roots \(\alpha \in \Phi(\SO_{2n + 1}, T)\) such that
\(h_\alpha\) equals \(0\) in \(\ft\), we have that
$H:=C_{\SO_{2n+1}}(\ft^*)$ is connected.
Then \cite[Lemma 3.1]{KW76} gives that \(H\) is a reductive group..
Concretely,
\(H\) is isomorphic to $(\SO_3)^n$, and
generated by \(T\) and its short-root groups in \(\SO_{2n + 1}\); equivalently, it is the pre-image of \((\Sp_2)^n \subset \Sp_{2n}\) under
\(\SO_{2n + 1}\sra\Sp_{2n}\). The set
\[
\ft':=\{v\in\ft^*\;|\;C_G(v)=H\}
\]
is dense open.

Since
\(H\) is a connected, reductive group and
\(C_G(H)\) is contained in \(C_G(T) = T \subset H\), it follows that
the identity component of \(N_G(H)\) is
\((H\cdot C_G(H))^\circ = H\).
Thus, \(N_G(H)\) is smooth, and so
generated by $N_G(T)$ and $H$.  The Tits lifts (with respect to some fixed system of positive roots) of the simple long-root reflections generate a copy of \(S_n\) that is a direct complement to \(H\) in \(N_G(H)\).

A choice of one from each opposite pair of short roots gives a basis of \(\ft^*\), hence an
isomorphism $\ft^*\cong\mathbb{A}^n$ under which $N_G(H)/H\cong S_n$ acts by permuting the coordinates, and $\ft'$ is the locus with distinct non-zero coordinates.

In particular, the image $V:=\im(df^*:\sp_{2n}^*\ra\so_{2n+1}^*)$ of the dual of the differential of \(f\) is the sum of $\ft^*$ and those weight spaces of \(T\) on \(\fg^*\) corresponding to the long roots of \(T\) on $\so_{2n+1}$.
(In Section \ref{sec:second}, we identify
\(V\) with \(\so_{2n}^*\).)
Since $V$ contains \(\ft^*\) and is $\SO_{2n+1}$-stable, all semisimple elements of $\so_{2n+1}^*$ lie in $V$.

Consider the action map
\[
c:\SO_{2n+1}\times_{N_G(H)}\ft^*\ra V.
\]
This restricts to an isomorphism from $\SO_{2n+1}\times_{N_G(H)}\ft'$ onto its image, which is open in $V$ because the differential is non-singular (this is because $C_{\so_{2n+1}}(v)=\fh$ for every $v\in\ft'$). In particular $c$ is birational, and the image of $c$ is dense in $V$, proving Theorem \ref{thm:main}\ref{ss}. Consider the map
\[
\bar{c}:\ft^*\quo S_n\ra V\quo\SO_{2n+1}.
\]
We will prove that it is an isomorphism by constructing its inverse mimicking the proof in \cite[II.3.17\textquotesingle]{SS70}. Since both sides are affine varieties, we can construct the map at the level of ring homomorphisms, i.e., given an $S_n$-invariant regular function $f$ we shall construct an $\SO_{2n+1}$-invariant regular function on $V$. Indeed, $f$ defines a function on the domain of $c$, and thus a rational function on $V$, i.e., there are relatively prime polynomials $g,h\in\cO(V)$ such that $f\times c^*(h)=c^*(g)\in\cO(\SO_{2n+1}\times_{N_G(H)}\ft^*)$.
Since $f$ is by construction $\SO_{2n+1}$-invariant, so is $g/h$. Since $\SO_{2n+1}$ is semisimple and $\cO(V)$ is a polynomial ring, this means that both $g$ and $h$ are $\SO_{2n+1}$-invariant.
Since \(f\) is an isomorphism on \(T\), we have that
\(g\) vanishes on the zero set of \(h\) on \(\ft^*\), and hence
\(g\) vanishes on every semisimple element of
the zero set of \(h\).
Now fix an arbitrary \(v \in V\) in the zero set of \(h\).
Thanks to Lemma \ref{lem:orbit}, the closure of the
\(\SO_{2n + 1}\)-orbit of \(v\) contains
a semisimple element \(v\semi\), necessarily in \(V\), so that
\(g(v\semi)\) equals \(g(v)\) and
\(h(v\semi)\) equals \(h(v)\).
In particular, \(h(v\semi)\) equals \(h(v) = 0\), so $g(v)=g(v\semi)=h(v\semi)$ also equals $0$.

By Hilbert's Nullstellensatz this implies that $h\mid g$, i.e., we may assume $h=1$. One verifies that $f\mapsto g$ is then our desired inverse, and therefore $\bar{c}$ is an isomorphism. This proves Theorem \ref{thm:main}\ref{iso2}.

That $\bar{c}$ is injective shows in particular that every two semisimple orbits go to different points in $V\quo\SO_{2n+1}$, and hence they have to be closed by Lemma \ref{lem:orbit}. This finishes the proof of Theorem \ref{thm:main}\ref{key}.

\section{Second approach}
\label{sec:second}

We outline another approach to the proof of
Theorem \ref{thm:main}\ref{key}, \ref{ss}, and \ref{iso2}.
We continue to assume that \(k\) has characteristic \(2\).

Let \(n\) be a positive integer, and
\(q\) a quadratic form on a (\(2n + 1\))-dimensional \(k\)-vector space \(V\)
such that,
if we write
\(b\) for the polar \((x, y) \mapsto q(x + y) - q(x) - q(y)\) of \(q\) and
\(\ell\) for the radical
\(\{x \in V \;|\; \text{\(b(x, y) = 0\) for all \(y \in V\)}\}\)
of \(b\), then
\(\ell\) is \(1\)-dimensional.
Then \(b\) gives a
non-degenerate, alternating form \(\bar b\) on
\(\bar V := V/\ell\),
every element of \(\SO(V, q)\) preserves \(\ell\), and
the resulting map \(\SO(V, q) \ra \GL(\bar V)\) is actually
the isogeny \(f : \SO(V, q) \ra \Sp(\bar V, \bar b)\) that
we used in the previous section.

Fix for the moment a complement \(V'\) to \(\ell\) in \(V\).
Extension trivially across \(\ell\) provides a map
\(\SO(V', q) \ra \SO(V, q)\),
which we regard as an inclusion.
We may also transport \(b\) to \(V'\), and then we have that
\(\SO(V', q)\) sits inside \(\Sp(V', b)\).
Then the relevant maps fit into a commutative diagram
\[\begin{tikzcd}
\SO(V, q) \ar[r, twoheadrightarrow] & \Sp(\bar V, \bar b) \ar[d, "\sim"{rotate=90, anchor=north}] \\
\SO(V', q) \ar[r, hookrightarrow]\ar[u, hookrightarrow] & \Sp(V', b).
\end{tikzcd}\]

We will soon fix a maximal torus in \(\SO(V', q)\), but
for the moment we want to be able to work with an arbitrary choice.

If \(T\) is a maximal torus in \(\SO(V', q)\), then
\(\SO(V', q)\) is generated by
\(T\) and its long-root subgroups in \(\SO(V, q)\);
the image of \(T\) is a maximal torus in \(\Sp(\bar V, \bar b)\), and
the image of \(\SO(V', q)\) in \(\Sp(\bar V, \bar b)\) is
the subgroup of the latter generated by
\(f(T)\) and
its short-root subgroups in \(\Sp(\bar V, \bar b)\); and
\(f\) restricts to an isomorphism of \(\SO(V', q)\) onto its image.
The subgroup \(\SO(V', q)\) of \(\SO(V, q)\), and
its isomorph in \(\Sp(\bar V, \bar b)\),
depend on the choice of \(V'\).
However, the image of the embedding of Lie algebras
\(\so(V', q) \ra \sp(\bar V, \bar b)\) does not depend on the choice of \(V'\).
In fact,
since the complement
\(\{X \in \so(V, q) \;|\; X V' \subset \ell\}\) to
(the image of) \(\so(V', q)\) in \(\so(V, q)\) is
annihilated by \(\so(V, q) \ra \sp(\bar V, \bar b)\), we have that
the images of \(\so(V', q)\) and \(\so(V, q)\) in \(\sp(\bar V, \bar b)\) are
equal.
This common image may be described
without reference to \(V'\) as
\(\{\bar X \in \sp(\bar V, \bar b) \;|\;
	\text{\(\bar b(\bar X\bar v, \bar v) = 0\) for all
		\(\bar v \in \bar V\)}\}\).

We now write
\(\SO_{2n + 1}\) for \(\SO(V, q)\), and
\(\Sp_{2n}\) for \(\SO(\bar V, \bar b)\).
In order to emphasise that \(\so(V', q)\) does not
depend on the choice of \(V'\), we denote it by \(\so_{2n}\).
Thus we have a
sub-\(\Sp_{2n}\)-representation
\(\so_{2n}\) of \(\sp_{2n}\), and
dually a
quotient \(\Sp_{2n}\)-representation
\(\so_{2n}^*\) of \(\sp_{2n}^*\).
Of course, the quotient \(\SO_{2n + 1} \sra \Sp_{2n}\) allows us
to inflate any representation of \(\Sp_{2n}\) to
a representation of \(\SO_{2n + 1}\).
Then we have identified
the \(\SO_{2n + 1}\)-representation \(\so_{2n}^*\) with
\(\sp_{2n}^*/\ker(\sp_{2n}^* \ra \so_{2n + 1}^*)\), hence with
\(\im(\sp_{2n}^* \ra \so_{2n + 1}^*)\).
Since the composition
\(\so_{2n} \ra \so_{2n + 1} \ra \sp_{2n}\)
(which \textit{a priori} depends on the choice of \(V'\)) is the inclusion
\(\so_{2n} \ra \sp_{2n}\)
(which doesn't), also the composition
\(\sp_{2n}^* \ra \so_{2n + 1}^* \ra \so_{2n}^*\) is the quotient
\(\sp_{2n}^* \ra \so_{2n}^*\).
That is, \(\im(\sp_{2n}^* \ra \so_{2n + 1}^*)\) is
a canonical lift of \(\so_{2n}^*\) in \(\so_{2n + 1}^*\), which
we use to regard \(\so_{2n}^*\) as
a sub-\(\SO_{2n + 1}\)-representation of
\(\so_{2n + 1}^*\).

Now fix a maximal torus \(T\) in \(\SO(V', q)\).
Then \(T\) is also a maximal torus in \(\SO_{2n + 1}\), and
the isogeny \(\SO_{2n + 1} \ra \Sp_{2n}\) is an isomorphism of
\(T\) onto a maximal torus in \(\Sp_{2n}\).
We shall abuse notation by denoting the image torus also by \(T\).
Thus \(\ft\) is contained in \(\so_{2n}\).
Note that \(f\) restricts to an isomorphism
\(N_{\SO_{2n + 1}}(T)/T \ra N_{\Sp_{2n}}(T)/T\) that
intertwines their actions on \(T\).
Write \(W'\) for \(N_{\SO(V', q)}(T)/T\), and
\(W\) for \(N_{\SO_{2n + 1}}(T)/T\) and its isomorph \(N_{\Sp_{2n}}(T)/T\).
Then \(W'\) is a normal subgroup of \(W\), and
there is a complement to \(W'\) in \(W\) that
acts by sign changes on a basis for \(X_*(T)\), hence
trivially on \(\ft = X_*(T) \otimes_{\mathbb Z} k\) and on \(\ft^*\).
That is,
\(\ft\quo W'\) equals \(\ft\quo W\) and
\(\ft^*\quo W'\) equals \(\ft^*\quo W\).
Recall that there are functions
\(a_0, \dotsc, a_n \in k[\so_{2n}]\) such that, for every
\(X \in \so_{2n}\), if
we view \(X\) as an element of \(\gl_{2n}\), then
the characteristic polynomial of \(X\) is the element
\((a_0(X) + a_1(X)\lambda + \dotsb + a_n(X)\lambda^n)^2\)
of \(k[\lambda]\).
Since \(k\) has characteristic \(2\),
these functions are uniquely determined.
In particular, they are not just \(\SO(V', q)\)-\nolinebreak, but actually
\(\Sp_{2n}\)-\nolinebreak, invariant.
Since \(a_0\rvert_\ft, \dotsc, a_n\rvert_\ft\) are
the elementary symmetric polynomials in the diagonal entries of
an element of \(\ft\),
they generate \(k[\ft]^{W'} = k[\ft]^W\).
Thus the restriction map
\(k[\so_{2n}]^{\SO(V', q)} \ra k[\ft]^{W'} = k[\ft]^W\) is
already surjective when restricted to
\(k[\so_{2n}]^{\Sp_{2n}}\); but
Theorem \ref{thm:main}\ref{iso2} gives that it is an isomorphism, so that
\(k[\so_{2n}]^{\Sp_{2n}}\) equals \(k[\so_{2n}]^{\SO(V', q)}\) and
the restriction map
\(k[\so_{2n}]^{\Sp_{2n}} \ra k[\ft]^W\) is
an isomorphism of \(k\)-algebras, i.e.,
the dual map \(\ft\quo W \ra \so_{2n}\quo\Sp_{2n}\) is
an isomorphism of schemes.
We will use this later to prove Theorem \ref{thm:main}\ref{iso2}.

Since
every semisimple element of \(\sp_{2n}\) belongs to
the Lie algebra of some maximal torus in \(\Sp_{2n}\)
\cite[Proposition 11.8]{Bo}; since
any two maximal tori in \(\Sp_{2n}\) are
\(\Sp_{2n}(k)\)-conjugate; and since
\(\so_{2n}\) is a sub-\(\Sp_{2n}\)-representation of \(\sp_{2n}\),
it follows that
\(\so_{2n}\) contains all semisimple elements of \(\sp_{2n}\).
That is, we have containments
\(\so_{2n}\semis \subset \sp_{2n}\semis \subset \so_{2n}\).
In particular, since
Theorem \ref{thm:main}\ref{ss}
(for \(\SO(V', q) \curvearrowright \so(V', q) = \so_{2n}\)) shows that
\(\overline{\so_{2n}\semis}\) equals \(\so_{2n}\), also
\(\overline{\sp_{2n}\semis}\) equals \(\so_{2n}\).
This shows Theorem \ref{thm:main}\ref{ss}
(for \(\Sp_{2n} \curvearrowright \sp_{2n}\)).
Therefore, the fact that
\(\ft\quo W \ra \so_{2n}\quo\Sp_{2n}\) is an isomorphism is exactly
Theorem \ref{thm:main}\ref{iso2} for
\(\Sp_{2n} \curvearrowright \sp_{2n}\).

Since the functoriality of the Jordan decomposition
\cite[Theorem 4.4(4)]{Bo} guarantees that
an element of \(\so_{2n}\) is semisimple if and only if
it is so as
an element of \(\sp_{2n}\), we have equality
\(\so_{2n}\semis = \sp_{2n}\semis\).
Now the \(k\)-points of
\(\so_{2n}\quo\SO(V', q) = \so_{2n}\quo\Sp_{2n}\) are,
on the one hand, exactly
the closed \(\SO(V', q)\)-orbits on \(\so_{2n}\), which are
the \(\SO(V', q)\)-orbits through elements of
\(\so_{2n}\semis = \sp_{2n}\semis\), and,
on the other hand, exactly
the closed \(\Sp_{2n}\)-orbits on \(\so_{2n}\).
This shows that \(\Sp_{2n}\)-orbits through
elements of \(\sp_{2n}\semis\) are closed.
Lemma \ref{lem:orbit} gives the converse, and so
establishes Theorem \ref{thm:main}\ref{key}.

Nearly the same argument works for
\(\SO_{2n + 1} \curvearrowright \so_{2n + 1}^*\),
but there are two major problems.
First, we do not know an easy, concrete description of
\(k[\so_{2n}^*]^{\SO(V', q)}\) that allows us to see that it equals
\(k[\so_{2n}^*]^{\SO_{2n + 1}}\).
Second, although
each semisimple element of \(\so_{2n + 1}^*\) belongs to
\(\ft_1^*\) for some maximal torus \(T_1\) in \(\SO_{2n + 1}\), and to
\(\so_{2n}^*\), there is no guarantee \textit{a priori} that
we can take the maximal torus \(T_1\) in \(\SO(V', q)\).
That is, although we have the inclusion
\((\so_{2n + 1}^*)\semis \subset \so_{2n}^*\),
it is no longer clear that the inclusion
\((\so_{2n}^*)\semis \subset (\so_{2n + 1}^*)\semis\)
is an equality.
An easy way to deal with these problems is to move
back from the dual to the primal side.

We do this by exhibiting an \(\SO_{2n + 1}\)-invariant,
non-degenerate, bilinear form on \(\so_{2n}\).
We start with the polar \(B_{\mathbb Z}\) of
the quadratic form on \(\sp_{2n, \mathbb Z}\)
defined in \cite[\S I.3]{Kot16}, and denoted there by \(Q_2\).
Concretely, it is not hard to show, though we do not need to use, that
\(B_{\mathbb Z}(X, Y)\) equals
\(\Tr(X Y)\) for all \(X, Y \in \sp_{2n, \mathbb Z}\).
Let us momentarily write \(\so_{2n, \mathbb Z}\) for
the subspace of \(\sp_{2n, \mathbb Z}\) spanned by
\(\ft_{\mathbb Z} := X_*(T)\)
and
the short-root subspaces for \(T_{\mathbb Z}\).
This is an abuse of notation, because
\(\so_{2n, \mathbb Z}\) is not
a subalgebra of \(\sp_{2n, \mathbb Z}\), but
\(\so_{2n, \mathbb Z} + 2\sp_{2n, \mathbb Z}\) is a
\(\Sp_{2n, \mathbb Z}\)-stable subalgebra on which
\cite[Lemma I.2(1) and \S I.5]{Kot16} gives that
\(B_{\mathbb Z}\) is \(2\Z\)-valued, so we may write
\(B_{\mathbb Z}'\) for \(1/2\) times the restriction of \(B\).
Thus, \(B_{\mathbb Z}'\) is a \(\mathbb Z\)-valued form.
Then we write \(B'\) for the \(k\)-valued form on
\(\so_{2n} =
\left((\so_{2n, \mathbb Z} + 2\sp_{2n, \mathbb Z})/2\sp_{2n, \mathbb Z}\right)\otimes_{\mathbb Z} k\)
induced by \(B_{\mathbb Z}'\).
Concretely, for any \(X, Y \in \so_{2n}\), we can choose lifts
\(\til X\) and \(\til Y\) of \(X\) and \(Y\) in
\(\so_{2n, \mathbb Z} \otimes_{\mathbb Z} W(k)\).
Then \(\Tr(\til X\til Y)\) belongs to \(2 W(k)\), and the image of
\(\frac1 2\Tr(\til X\til Y)\) in \(k\) is
\(B'(X, Y)\).
The pairing \(B'\) is
\(\Sp_{2n}\)-, hence \(\SO_{2n + 1}\)-, invariant because
the original form \(B\) was \(\Sp_{2n, \mathbb Z}\)-invariant
(or because one can verify it directly from
the concrete description).
It is non-degenerate on \(\ft\) because
it is Weyl invariant, so that
the basis of \(\ft\) consisting of coroots to
long roots in \(\Phi(\Sp_{2n}, T)\) is orthogonal; and
\cite[Lemma I.1]{Kot16} says that, if
\(\alpha \in \Phi(\Sp_{2n}, T)\) is long, then
\(B'(h_\alpha, h_\alpha)\) equals \(1\).
and then another application of
\cite[\S I.5]{Kot16}, together with the explicit description of
\cite[\S I.4]{Kot16}, gives that it is also non-degenerate on
the orthogonal complement to \(\ft\) in \(\so_{2n}\).

We now have a duality isomorphism
\(\so_{2n} \to \so_{2n}^*\) of
representations of \(\SO_{2n + 1}\), hence
an isomorphism of
\(k[\so_{2n}]^{\Sp_{2n}} = k[\so_{2n}]^{\SO_{2n + 1}}\) with
\(k[\so_{2n}^*]^{\SO_{2n + 1}}\) that restricts to
an isomorphism of
\(k[\so_{2n}]^{\SO(V', q)}\) with
\(k[\so_{2n}^*]^{\SO(V', q)}\).
Since we have already shown that
\(k[\so_{2n}]^{\SO(V', q)}\) equals \(k[\so_{2n}]^{\Sp_{2n}}\),
it follows that
\(k[\so_{2n}^*]^{\SO(V', q)}\) equals \(k[\so_{2n}^*]^{\SO_{2n + 1}}\).
That is, \(\so_{2n}^*\quo\SO(V', q)\) equals
\(\so_{2n}^*\quo\SO_{2n + 1}\).
Thus, as for
\(\Sp_{2n} \curvearrowright \sp_{2n}\), we have that
Theorem \ref{thm:main}\ref{key}, \ref{iso2} for
\(\SO_{2n + 1} \curvearrowright \so_{2n + 1}^*\) will follow from
the analogous results for
\(\SO(V', q) \curvearrowright \so_{2n}^*\) once we show
Theorem \ref{thm:main}\ref{ss}, i.e., that
\((\so_{2n + 1}^*)\semis\) equals \((\so_{2n}^*)\semis\).

First, remember that
we have identified \(\ft^*\) with
the \(T\)-fixed subspaces of
\(\so_{2n}^*\) and \(\so_{2n + 1}^*\); and then
the quotient \(\so_{2n + 1}^* \ra \so_{2n}^*\)
restricts to the identity on \(\ft^*\).
In this sense, we may say that
\(\so_{2n}^*\), as a sub-\(\SO_{2n + 1}\)-representation of
\(\so_{2n + 1}^*\), contains \(\ft^*\).
It is clear that
\((\so_{2n}^*)\semis := \Ad^*(\SO(V', q))\ft^*\) is
contained in
\((\so_{2n + 1}^*)\semis = \Ad^*(\SO_{2n + 1})\ft^*\).
For the reverse containment,
as observed in Remark \ref{rem:Killing}, our
\(\SO_{2n + 1}\)-equivariant isomorphism of
\(\so_{2n}\) with \(\so_{2n}^*\) restricts to an
\(\SO_{2n + 1}\)-equivariant bijection
\(\sp_{2n}\semis = \so_{2n}\semis \ra (\so_{2n}^*)\semis\).
Since \(\sp_{2n}\semis\) is preserved by \(\Sp_{2n}\), and hence by
\(\SO_{2n + 1}\), so too is \((\so_{2n}^*)\semis\).
This establishes the reverse containment of
\(\Ad^*(\SO_{2n + 1})\ft^* = (\so_{2n + 1}^*)\semis\) in
\((\so_{2n}^*)\semis\), so gives equality.

\section{Non-uniqueness of the Jordan decomposition}\label{sec:nonuni}

Suppose that there are
roots \(\alpha, \beta \in \Phi(G, T)\)
such that \(\alpha + \beta\) belongs to \(\Phi(G, T)\), but
the coroot \(h_{\alpha + \beta}\) does not
lie in the span of the coroots \(h_\alpha\) and \(h_\beta\).

\begin{remark}
\label{rem:nonclosed}
The irreducible component of \(\Phi(G, T)\) containing
\(\alpha\) and \(\beta\) is not
simply laced,
at least one of \(\alpha\) or \(\beta\) is short,
\(\alpha + \beta\) is long, and
the length of \(\alpha + \beta\) is \(0\) in \(k\).
Since the closed subsystem
\(\Phi_{\alpha, \beta} :=
	\Phi(G, T) \cap (\mathbb Z\alpha + \mathbb Z\beta)\) of
\(\Phi(G, T)\) generated by \(\alpha\) and \(\beta\) has
rank \(2\) and is not simply laced, we have that
the characteristic of \(k\) equals \(2\) and
\(\Phi_{\alpha, \beta}\) is of type \(\mathsf B_2 = \mathsf C_2\); or
the characteristic of \(k\) equals \(3\) and
\(\Phi_{\alpha, \beta}\) is of type \(\mathsf G_2\).
We now proceed by exhaustion, first choosing
a system of simple roots for \(\Phi\) containing \(\alpha\).
Then running through the possibilities shows that,
having fixed \(\alpha\),
there are two possibilities for \(\beta\), which are conjugate in
\(\operatorname{stab}_{W(\Phi_{\alpha, \beta})}(\alpha)\); and
\(\alpha\) and \(\beta\) are both short.
Thus, if we write \(H\) for the subgroup of \(G\) generated by \(T\) and
the root subgroups for \(T\) corresponding to
roots in \(\Phi_{\alpha, \beta}\), then
there is a subgroup \(H'\) of \(H\) containing \(T\) such that
\(\Phi(H', T)\) consists precisely of
the short roots in \(\Phi(H, T) = \Phi_{\alpha, \beta}\).
Specifically, we are in one of two situations.
\begin{enumerate}
\item
The characteristic of \(k\) is \(2\),
the derived subgroup of \(H\) is \(\SO_5\) or \(\Sp_4\),
the derived subgroup of \(H'\) is \(\operatorname{PGO}_4\) or \(\SO_4\), and
\(\alpha\) and \(\beta\) are orthogonal short roots.
Then \(\Phi_{\alpha, \beta}\) is the union of the set
\(\pm\{\alpha, \beta\}\) of short roots and the set
\(\pm\{\alpha + \beta, \alpha - \beta\}\) of long roots.
\item
The characteristic of \(k\) is \(3\),
the derived subgroup of \(H\) is \(G_2\),
the derived subgroup of \(H'\) is \(\operatorname{PGL}_3\), and
\(\alpha\) and \(\beta\) are short roots
whose sum is a long root.
Then \(\Phi_{\alpha, \beta}\) is the union of the set
\(\pm\{\alpha, \beta, \alpha - \beta\}\) of short roots and the set
\(\pm\{\alpha + \beta, 2\alpha - \beta, \alpha - 2\beta\}\) of long roots.
\end{enumerate}
\end{remark}

Remark \ref{rem:nonclosed} gives that
\(U_\alpha\) and \(U_{\alpha + \beta}\) commute.
Further, for each non-trivial
\(X_{-\alpha - \beta} \in \fg_{-\alpha - \beta}\) and each
\(k\)-algebra \(A\), the map
\(U_\alpha(A) \to \fg_{-\beta} \otimes_k A\) given by
\(u_\alpha \mapsto (\Ad(u_\alpha) - 1)X_{-\alpha - \beta}\) is an
isomorphism.

Now, since \(h_{\alpha + \beta}\) does not belong to
the span of \(h_\alpha\) and \(h_\beta\), there is some
\(X^*\semi \in \ft^*\) that vanishes on \(h_\alpha\) and \(h_\beta\), but
satisfies \(X^*\semi(h_{\alpha + \beta}) = 1\).
In particular, \(X^*\semi\) is fixed by \(U_\alpha\)
\cite[Lemma 3.1ii)]{KW76}.
Let \(X^*\nilp\) be a non-zero element of the
\(\beta\)-weight space for \(T\) in \(\fg\), so that
\(X^*\nilp\) vanishes on the \(T\)-stable complement to
\(\fg_{-\beta}\) in \(\fg\).
Since \(-2\alpha - \beta\) does not belong to \(\Phi(G, T)\),
we have that
\(U_{\alpha + \beta}\) fixes \(X^*\nilp\).
Then \(X^* := X^*\semi + X^*\nilp\) has an obvious Jordan decomposition.

Now the Chevalley commutation relations say that the map
\(U_{\alpha + \beta}(k) \ra (\fg_{-\alpha - \beta})^*\) given by
\(u_{\alpha + \beta} \mapsto (\Ad^*(u_{\alpha + \beta}) - 1)X^*\semi\)
is a \(T(k)\)-equivariant isomorphism, in particular
linear for
the \(T\)-invariant linear structure on \(U_{\alpha + \beta}\);
and, by our description of \(\Phi_{\alpha, \beta}\) above, that the map
\(U_\alpha(k) \ra (\fg_{-\alpha - \beta})^*\) given by
\(u_\alpha \mapsto (1 - \Ad^*(u_\alpha))X^*\nilp\) is also an isomorphism,
this time not \(T(k)\)-equivariant, but still
linear for
the \(T\)-invariant linear structure on \(U_\alpha\).
We thus obtain an isomorphism of vector groups
\(f : U_{\alpha + \beta} \ra U_\alpha\).

Remember that \(U_\alpha\) and \(U_{\alpha + \beta}\) commute.
In particular, the map
\(U_{\alpha + \beta} \ra G\) given by
\(u_{\alpha + \beta} \mapsto f(u_{\alpha + \beta})u_{\alpha + \beta}\) is
a homomorphism.
Let \(A\) be a \(k\)-algebra.
For every element
\(u_{\alpha + \beta} \in U_{\alpha + \beta}(A)\), if
we put \(u_\alpha = f(u_{\alpha + \beta})\), then,
since
\(U_\alpha\) fixes \(X^*\semi\) and
\(U_{\alpha + \beta}\) fixes \(X^*\nilp\), we have the equalities
\[
(1 - \Ad^*(u_\alpha u_{\alpha + \beta}))X^*\nilp =
(1 - \Ad^*(u_\alpha))X^*\nilp =
(\Ad^*(u_{\alpha + \beta}) - 1)X^*\semi =
(\Ad^*(u_\alpha u_{\alpha + \beta}) - 1)X^*\semi.
\]
That is, our homomorphism
\(U_{\alpha + \beta} \ra G\) actually lands in
\(C_G(X^*)\), hence in \(C_G(X^*)_\text{red}^\circ\).
Moreover, if \(A\) equals \(k\) and
\(u_{\alpha + \beta}\) is non-trivial, then
\(\Ad^*(u_\alpha u_{\alpha + \beta})X^*\semi =
\Ad^*(u_{\alpha + \beta})X^*\semi\) is a semisimple part of
\(X^*\) that is
different from \(X^*\semi\).
Note that this shows that each of the following claims can fail:
\begin{itemize}
\item the claim in
\cite[\S3.8i)]{KW76} that the map
\(B/C_B(X^*\semi) \ra \mathbb A^m\) defined there
(given by evaluating at certain root vectors)
is an isomorphism;
\item the claim in
\cite[\S3.8ii)]{KW76}, that we have
the containment written there as
\(C_G(X^*)^\circ \subset C_G(X^*\semi)^\circ\)
(but actually meaning, given
the implicit passage to maximal reduced subschemes,
\(C_G(X^*)_\text{red}^\circ \subset C_G(X^*\semi)^\circ\)); and
\item the claim in
\cite[Theorem 4iv)]{KW76}, that
Jordan decompositions are unique.
\end{itemize}

So far, our computations have been abstract.
Let us now give a concrete example.
Let $k$ be an algebraically closed field of characteristic $2$, and
$G=\Sp_4$ the closed subgroup of $\GL_4$ stabilizing the symplectic form:
\[
\left\langle\matr{x_1\\x_2\\x_3\\x_4},\matr{y_1\\y_2\\y_3\\y_4}\right\rangle=x_1y_4+x_2y_3+x_3y_2+x_4y_1.
\]
The subgroup \(T\) of diagonal matrices in \(G\) is a maximal torus, and,
with hopefully obvious notation for the characters
\(\epsilon_1, \dotsc, \epsilon_4\) of
the group of diagonal matrices in \(\GL_4\), so that
the restrictions of
\(\epsilon_3\) and \(\epsilon_4\) to \(T\) agree with
the restrictions of
\(-\epsilon_2\) and \(-\epsilon_1\), we will work here with
\(\alpha = \epsilon_1 - \epsilon_2\) and
\(\beta = \epsilon_1 + \epsilon_2\).

The embedding of \(G\) in \(\GL_4\) gives an embedding of \(\fg\) in \(\gl_4\), hence a quotient \(\gl_4^* \ra \fg^*\).
We further identify $\gl_4^*$ with $\gl_4$ via the trace pairing. We will then represent elements in $\fsp_4^*$ non-uniquely by $4\times4$ matrices.
We abuse notation by using equality to denote a choice of representative,
which allows us to write, for example,
\[
X^* =
\matr{1&&1&\\&0&&\\&&1&\\&&&0} =
\matr{1&&1&\\&1&&\\&&0&\\&&&0},
\]
where all blank entries are filled by $0$,
for an element of \(\fsp_4^*\).
It has an evident Jordan decomposition:
\begin{equation}
\label{eq:Xss}
X^*=X^*\semi+X^*\nilp,\;\;\text{where}\;\;X^*\semi=\matr{1&&&\\&0&&\\&&1&\\&&&0},\;\;X^*\nilp=\matr{0&&1&\\&0&&\\&&0&\\&&&0}.
\end{equation}
The morphism \(U_{\alpha + \beta} \ra U_\alpha\) is given by
\[
\matr{1&&&t\\&1&&\\&&1&\\&&&1} \mapsto
\matr{1&t&&\\&1&&\\&&1&t\\&&&1}.
\]
Specifically, consider the elements $u_\alpha \in U_\alpha(k)$ and $u_{\alpha + \beta} \in U_{\alpha + \beta}(k)$ given by
\[
u_\alpha=\matr{1&1&&\\&1&&\\&&1&1\\&&&1},\;\;u_{\alpha + \beta}=\matr{1&&&1\\&1&&\\&&1&\\&&&1}.
\]
They satisfy $u_\alpha u_{\alpha + \beta}=u_{\alpha + \beta}u_\alpha$ and $u_\alpha^2=u_{\alpha + \beta}^2=\Id$. Since $\fsp_4\ira\gl_4$ and thus $\gl_4^*\sra\fsp_4^*$ is $\Sp_4$-equivariant, we compute that
\[
\Ad^*(u_\alpha)X^*\semi=\matr{1&1&&\\&0&&\\&&1&1\\&&&0}=\matr{1&0&&\\&0&&\\&&1&0\\&&&0}=X^*\semi.
\]
Also it is easy to see that $\Ad^*(u_{\alpha + \beta})X^*\nilp=X^*\nilp$. On the other hand we have
\[
\Ad^*(u_{\alpha + \beta})X^*\semi=\matr{1&&&1\\&0&&\\&&1&\\&&&0}\ne X^*\semi,\;\;\Ad^*(u_\alpha)X^*\nilp=\matr{0&&1&1\\&0&&\\&&0&\\&&&0}\ne X^*\nilp.
\]
In particular $\Ad^*(u_\alpha u_{\alpha + \beta})X^*=X^*$, but $\Ad^*(u_\alpha u_{\alpha + \beta})X^*\semi\ne X^*\semi$.

\begin{remark}
\label{rem:no-restricted-dual}
One might wonder if there is a sort of
``restricted dual Lie algebra'' structure, i.e., a
\(p\)th-power map analogous to the one carried by a
restricted Lie algebra, on \(\fg^*\).
For every maximal torus \(T\),
there is such an operation on \(\ft^*\), where, for
every \(Y^* \in \ft^*\), we remember that
the \(p\)th-power map \((\cdot)^{[p]}\) on \(\ft\) is bijective, say
with inverse \((\cdot)^{-[p]}\), and then define
\((Y^*)^{[p]}(h) = \bigl(Y^*(h^{-[p]})\bigr)^p\) for
every \(h \in \ft\).
This structure is obviously functorial.
Moreover, for every maximal torus \(T\) in \(G\) and every
\(\alpha \in \Phi(G, T)\), we have that
\(h_\alpha^{[p]}\) equals \(h_\alpha\);
so, for every \(Y^* \in \ft^*\), we have that
\(\{\alpha \in \Phi(G, T) \;|\;
	Y^*(h_\alpha) = 0\}\) equals
\(\{\alpha \in \Phi(G, T) \;|\;
	(Y^*)^{[p]}(h_\alpha) = 0\}\), and hence by
\cite[Lemma 3.1]{KW76} that
\(C_G(Y^*)^\circ\) equals \(C_G((Y^*)^{[p]})^\circ\).

If \(Y^* \in \fg^*\) is semisimple and
\(T_1\) and \(T_2\) are maximal tori in \(C_G(Y^*)^\circ\), and
if we write
\((Y^*)^{[p]}_1 \in \ft_1^* \subset \fg^*\) and
\((Y^*)^{[p]}_2 \in \ft_2^* \subset \fg^*\) for
the resulting \(p\)th powers, then
there is some
\(g \in C_G(Y^*)^\circ(k) = C_G((Y^*)^{[p]}_1)(k)\) such that
\(g T_1 g^{-1}\) equals \(T_2\), and
\(g\) both fixes \((Y^*)^{[p]}_1\) and carries it to \((Y^*)^{[p]}_2\).
That is,
\((Y^*)^{[p]}_1\) equals \((Y^*)^{[p]}_2\), so
the \(p\)th power \((Y^*)^{[p]}\) is
independent of the choice of
maximal torus in \(C_G(Y^*)^\circ\).

We may thus ask if there is some \(p\)th-power operation on
all of \(\fg^*\) that satisfies the following properties,
analogous to those for restricted Lie algebras:
\begin{itemize}
\item it is equivariant for the coadjoint action of \(G(k)\);
\item it restricts to the operation that
we have just defined on \((\fg^*)\semis\);
\item it respects Jordan decompositions; and
\item every nilpotent element of \(\fg^*\) is
annihilated by some iterate of the operation.
\end{itemize}
Unfortunately, there is not.
If there were, then, for the element \(X^* \in \fg^*\) above, we would
have for sufficiently large \(N\) that
\((X^*)^{[p]^N}\) was equal to both
\((X^*\semi)^{[p]^N}\) and
\(\Ad^*(u_\alpha u_{\alpha + \beta})(X^*\semi)^{[p]^N}\), and hence that
\(C_G(X^*\semi)^\circ = C^G((X^*\semi)^{[p]^N})^\circ\) was equal to
\(C_G(\Ad^*(u_\alpha u_{\alpha + \beta})(X^*\semi)^{[p]^N})^\circ =
C_G(\Ad^*(u_\alpha u_{\alpha + \beta})X^*\semi)^\circ =
(u_\alpha u_{\alpha + \beta})C_G(X^*\semi)^\circ(u_\alpha u_{\alpha + \beta})^{-1}\).
It would follow finally that
\(C_G(X^*\semi)^\circ = u_\alpha^{-1} C_G(X^*\semi)^\circ  u_\alpha\)
was equal to
\(u_{\alpha + \beta}C_G(X^*\semi)^\circ u_{\alpha + \beta}^{-1}\), but
this is false, since
\(u_{\alpha + \beta}T u_{\alpha + \beta}^{-1}\) is
not contained in \(C_G(X^*\semi)^\circ\)
(because the \(T\)-equivariant projection of
\(\Lie(u_{\alpha + \beta}T u_{\alpha + \beta}^{-1})\) on
\(\fg_{\alpha + \beta}\) is
non-trivial).
\end{remark}

\section{Uniqueness of the Jordan decomposition}
\label{sec:uni}

Throughout this section, let
\(X^*\) be an element of \(\fg^*\).

We begin with a crude observation,
Remark \ref{rem:crude-uni}, on the uniqueness of
a Jordan decomposition whose semisimple part belongs to
the dual Lie algebra of a fixed maximal torus.

\begin{remark}
\label{rem:crude-uni}
Suppose that
\(T\) is a maximal torus, and
\(X^*_1\) and \(X^*_2\) are both semisimple parts of \(X^*\) that
belong to \(T\).
Then \(X^*_1\) and \(X^*_2\) both
agree with \(X^*\) on \(\ft\) and
vanish on the unique \(T\)-stable complement to \(\ft\) in \(\fg\), so
they are equal.
\end{remark}

Lemma \ref{lem:CX-vs-CXs} is used in the proof of
our uniqueness result Theorem \ref{uni} to
provide a handy `reference' Jordan decomposition.

\begin{lemma}
\label{lem:CX-vs-CXs}
If \(T\) is a diagonalizable subgroup of \(C_G(X^*)\), then
there is a Jordan decomposition of \(X^*\) whose
semisimple part belongs to \((\fg^*)^T\).
\end{lemma}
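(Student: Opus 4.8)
The plan is to reduce to the case where $T$ is a maximal torus and then invoke Remark \ref{rem:dual-Jordan} directly. First I would enlarge $T$: since $T$ is a diagonalizable subgroup of $C_G(X^*)$, it is contained in some maximal torus $T_1$ of $C_G(X^*)^\circ$ (here we use that $C_G(X^*)^\circ$ is a connected algebraic group, so its diagonalizable subgroups lie in maximal tori); in particular $T_1$ fixes $X^*$. Note $T_1$ need not be a maximal torus of $G$. Next I would pick a maximal torus $S$ of $G$ containing $T_1$; then $X^*$ is fixed by $T_1$ but perhaps not by all of $S$. The key point is that, by Theorem \ref{thm:main}\ref{Jordan} and Remark \ref{rem:dual-Jordan}, $X^*$ has \emph{some} Jordan decomposition $X^* = X^*\semi + X^*\nilp$ with $X^*\semi \in \fs'^*$ for some maximal torus $S'$ of $G$; the issue is to arrange $S'$ so that the resulting $X^*\semi$ is fixed by $T$, equivalently lies in $(\fg^*)^T$.

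The cleanest route, I expect, is to work inside $C_G(X^*)^\circ$ and exploit that $X^*$, restricted appropriately, is ``as nilpotent as possible'' on a complement. Concretely: let $H = C_G(X^*)^\circ_{\mathrm{red}}$, a connected group containing the torus $T_1 \supseteq T$. By the argument of \S\ref{sec:prep} applied to a Borel $B_H$ of $H$ containing $T_1$ (using \cite[Proposition 11.8]{Bo} on the Lie-algebra side, or rather its dual-Lie-algebra analogue), one produces a maximal torus of $G$ adapted to $X^*$; but more to the point, I would instead apply Remark \ref{rem:from-full-rank}-style reasoning in reverse. Actually the slickest argument: choose a Borel subgroup $B$ of $G$ whose intersection with $C_G(X^*)^\circ$ contains $T_1$, with unipotent radical $U$; since $X^*$ is $T_1$-fixed and $T \subseteq T_1$, the weight-space decomposition of $\fg^*$ under $T$ shows $X^*$ has no component outside weight-zero pieces that $T_1$ would detect. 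Then one runs the \cite[\S3.8]{KW76} root-system computation for the specific $B$ to extract $X^*\semi \in \fs^*$ for the maximal torus $S$ of $B$, and checks that because $X^*$ was $T$-fixed, so is the extracted $X^*\semi$ — the Chevalley-constant computation respects the $T$-action throughout, so $X^*\semi(h) = X^*(h)$ on $\ft$ and $X^*\semi$ vanishes on $T$-weight spaces, hence $X^*\semi \in (\fg^*)^T$.

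I expect the main obstacle to be exactly that last compatibility check: guaranteeing that the semisimple part produced by the root-theoretic algorithm of \cite[\S3.8]{KW76} (run for a well-chosen Borel $B \supseteq T$) is genuinely $T$-invariant, rather than merely $S$-semisimple for some maximal torus $S$ that happens to contain a conjugate of $T$. The way to control this is to insist from the start that $T$ normalizes (indeed centralizes, since $T \subseteq C_G(X^*)$) all the data in play: choose $S$ and $B$ with $T \subseteq S \subseteq B$, observe that the $T$-weight decomposition of $\fg^*$ is compatible with the $S$-weight (root) decomposition, and note that $X^*$ being $T$-fixed forces it to be supported on $T$-weight-zero components; the algorithm then only ever modifies $X^*$ by adding/subtracting vectors in those same $T$-weight-zero components (since it works with coroot directions $h_\alpha$ and root vectors whose $T$-weights vanish on the relevant sublattice), so the output $X^*\semi$ stays $T$-fixed. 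Once $X^*\semi \in (\fg^*)^T$ and $X^* = X^*\semi + X^*\nilp$ is verified to be a Jordan decomposition via Remark \ref{rem:dual-Jordan}, we are done.
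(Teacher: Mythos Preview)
Your approach can be made to work, but the paper's proof is considerably shorter and sidesteps exactly the step you flag as the main obstacle. The paper never touches the \cite[\S3.8]{KW76} construction here: it simply observes that, since \(T\) fixes \(X^*\), the element \(X^*\) lies in \((\fg^*)^T\), and that restriction to \(\Lie(C_G(T)^\circ)\) identifies \((\fg^*)^T\) \(C_G(T)^\circ\)-equivariantly with \(\Lie(C_G(T)^\circ)^*\). It then applies Theorem~\ref{thm:main}\ref{Jordan} with the full-rank reductive subgroup \(C_G(T)^\circ\) in place of \(G\) to obtain a Jordan decomposition there, and lifts it back to \(\fg^*\) via Remark~\ref{rem:from-full-rank}. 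The semisimple part is then in \((\fg^*)^T\) by construction.

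Your crucial unproven claim --- that the construction ``only ever modifies \(X^*\) by adding/subtracting vectors in those same \(T\)-weight-zero components'' --- amounts precisely to the assertion that the \(B(k)\)-conjugation produced by \cite[\S3.8]{KW76} can be taken in \(C_B(T)(k)\), i.e., that one is really running the construction inside \(C_G(T)^\circ\). So once made rigorous, your argument collapses to the paper's; the passage through \(C_G(T)^\circ\) is the missing idea, not merely a cosmetic shortcut. Separately, the first paragraph's detour through a maximal torus \(T_1\) of \(C_G(X^*)^\circ\) is never actually used in your final argument and is best dropped: \(C_G(X^*)\) need not be smooth, so invoking standard facts about diagonalizable subgroups of connected linear groups there would require extra care that the paper's route avoids entirely.
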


\begin{proof}
We have that \(X^*\) belongs to \((\fg^*)^T\), which is
the subspace of \(\fg^*\) that
vanishes on the unique \(T\)-stable complement to
\(\Lie(C_G(T)^\circ)\) in \(\fg\), and is
\(C_G(T)^\circ\)-equivariantly identified
by restriction to \(\Lie(C_G(T)^\circ)\) with
\(\Lie(C_G(T)^\circ)^*\).
Remark \ref{rem:from-full-rank} shows that the pull-back to
\((\fg^*)^T\) of
a Jordan decomposition in \(\Lie(C_G(T)^\circ)^*\) is
a Jordan decomposition in \(\fg^*\).
Then the result follows from
Theorem \ref{thm:main}\ref{Jordan},
with \(C_G(T)^\circ\) in place of \(G\).
\end{proof}

Let \(X^*\semi\) be the semisimple part of
a Jordan decomposition of \(X^*\).
By Remark \ref{rem:dual-Jordan}, there are
a maximal torus \(T\) in \(C_G(X^*\semi)\) and
a Borel subgroup \(B\) of \(G\) containing \(T\) such that
\(X^*\) agrees on \(\fb\) with \(X^*\semi\), and
\(X^*\) vanishes on the \(T\)-stable complement to
\(\Lie(C_G(X^*\semi)^\circ)\) in \(\fg\).
By \cite[Lemma 3.1]{KW76}, we have that
\(\Phi(C_G(X^*\semi)^\circ, T)\) is precisely the set of
\(\alpha \in \Phi(G, T)\) such that
\(X^*\semi(h_\alpha)\), or, equivalently, \(X^*(h_\alpha)\),
equals \(0\).

Write \(Z\) for the schematic centre of \(C_G(X^*\semi)^\circ\),
and
put \(H = C_G(Z)^\circ\).
We see here the main difference, for our purposes, between
the Lie algebra and the dual Lie algebra.
Namely, it is possible that \(H\) is strictly larger than
\(C_G(X^*\semi)^\circ\);
but the analogous construction on the Lie-algebra side always gives
\(C_G(Z(C_G(X\semi)^\circ))^\circ = C_G(X\semi)^\circ\) for
\(X\semi\) a semisimple element of \(\fg\).

By \cite[Remark 3.2]{Spi21}, there is an \(H\)-stable complement
\(\fh^\perp\) to \(\fh\) in \(\fg\).
It is the sum of the non-trivial-weight spaces for \(Z\) on \(\fg\).
Since \(\fh^\perp\) contains the
\(T\)-stable complement to \(\Lie(C_G(X^*\semi)^\circ)\) in \(\fg\),
we have that
\(X^*\) is trivial on \(\fh^\perp\), hence
fixed by \(Z\).

In characteristic \(0\), the centralizer of a
semisimple element is a Levi subgroup, so, unless
the Levi subgroup is all of \(G\), it has
a non-trivial central torus.
This need no longer happen in positive characteristic.
There are two failure modes.
Suppose that the characteristic of \(k\) is \(2\), and
\(G\) equals \(\Sp_4\).
Then the element \(X^*\semi\) given in \eqref{eq:Xss} has
the property that
\(C_G(X^*\semi)^\circ\) equals \(\SO_4\), whose centre is
\(Z(G)\).
This situation can only happen
(for an element \(X^*\semi\) that is not fixed by all of \(G\))
if the root system of \(C_G(X^*\semi)^\circ\) is not
closed in the root system of \(G\).
It is thus only visible in characteristic \(2\) and \(3\)
(see Remark \ref{rem:nonclosed}), and
is addressed by passing from \(C_G(X^*\semi)\) to \(H\).
Another failure mode occurs if
\(X^*\semi\) is given by
\(\begin{pmatrix}
a & b & c & d \\
e & f & g & c \\
i & j & -f & -b \\
n & i & -e & -a
\end{pmatrix} \mapsto a\).
Then \(C_G(X^*\semi)^\circ\) and \(H\) both
equal \(\Sp_2 \times \Sp_2\), whose centre is finite.
However, the centre in this case is at least a
non-trivial, connected group.
This phenomenon can occur in characteristic \(5\), as well as
\(2\) and \(3\).
Lemma \ref{lem:some-centre} shows that this is
the worst that can happen.

\begin{lemma}
\label{lem:some-centre}
If \(H\) does not equal \(G\), then
\((Z(H)/Z(G))^\circ\) is non-trivial.
\end{lemma}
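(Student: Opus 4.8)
The plan is to translate the statement into one about root lattices, reduce to the case of an almost-simple $G$, and finish with a short computation with coroots. Write $C=C_G(X^*\semi)^\circ$, so that $Z=Z(C)$ and $H=C_G(Z)^\circ$, and set $\Psi=\Phi(C,T)$; by \cite[Lemma 3.1]{KW76} we have $\Psi=\{\alpha\in\Phi(G,T)\mid X^*\semi(h_\alpha)=0\}$. First I would record that $Z(H)=Z$: the inclusions $Z\subseteq T\subseteq C\subseteq C_G(Z)^\circ=H$ give $Z\subseteq Z(H)$, while $Z(H)$ centralizes $C$, hence lies in $C_G(C)\subseteq C_G(T)=T$ and so in $Z(C)=Z$. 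Since the centralizer of a diagonalizable subgroup of a smooth group is smooth, $H$ is a smooth connected reductive group, so $Z(H)=\bigcap_{\alpha\in\Phi(H,T)}\ker\alpha$ and $Z(G)=\bigcap_{\alpha\in\Phi(G,T)}\ker\alpha$; denoting by $\langle\,\cdot\,\rangle$ the $\mathbb Z$-span in $X^*(T)$, this yields $X^*(Z(H)/Z(G))=\langle\Phi(G,T)\rangle/\langle\Phi(H,T)\rangle$. A root $\alpha$ lies in $\Phi(H,T)$ exactly when it is trivial on $Z=\bigcap_{\gamma\in\Psi}\ker\gamma$, i.e.\ when $\alpha\in\langle\Psi\rangle$, so $\Phi(H,T)=\Phi(G,T)\cap\langle\Psi\rangle$ and hence $\langle\Phi(H,T)\rangle=\langle\Psi\rangle$. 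Writing $p=\operatorname{char} k$ and using that a group of multiplicative type over $k$ has trivial identity component iff it is \'etale iff its character group is finite of order prime to $p$, the lemma becomes: \emph{if $\langle\Phi(G,T)\rangle/\langle\Psi\rangle\neq 0$, then it is infinite or has order divisible by $p$.} Since this group respects the decomposition of $\Phi(G,T)$ into irreducible components, I would reduce to $G$ almost simple as in Section \ref{sec:reduction}.

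Now a dichotomy. If $\Psi$ does not span $\langle\Phi(G,T)\rangle\otimes_{\mathbb Z}\mathbb Q$, then $\langle\Phi(G,T)\rangle/\langle\Psi\rangle$ is infinite (equivalently $Z(H)^\circ$ is a torus properly containing $Z(G)^\circ$), and we are done. So assume $\Psi$ spans $\langle\Phi(G,T)\rangle\otimes_{\mathbb Z}\mathbb Q$, and suppose for contradiction that $m:=[\langle\Phi(G,T)\rangle:\langle\Psi\rangle]$ is finite, $>1$, and prime to $p$.

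Choose simple roots $\gamma_1,\dots,\gamma_r$ of $\Psi$; they form a $\mathbb Z$-basis of $\langle\Psi\rangle$. For any $\beta\in\Phi(G,T)$ the coroot $\beta^\vee$ lies in the $\mathbb Q$-span of $\gamma_1^\vee,\dots,\gamma_r^\vee$ — it is intrinsic to the subspace spanned by $\Psi$, which contains $\beta$. Writing $\beta^\vee=\sum_i c_i\gamma_i^\vee$ and pairing with $\gamma_j$ gives a linear system with integer right-hand side and coefficient matrix the Cartan matrix $(\langle\gamma_j,\gamma_i^\vee\rangle)_{i,j}$ of $\Psi$, so by Cramer's rule the denominators of the $c_i$ divide its determinant $d(\Psi)$ (a product of the corresponding determinants for the irreducible components of $\Psi$). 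If $p\nmid d(\Psi)$, reduction of this relation modulo $p$ gives $h_\beta\in\operatorname{span}_k\{h_{\gamma_1},\dots,h_{\gamma_r}\}$; since $X^*\semi$ annihilates each $h_{\gamma_i}$, it annihilates every $h_\beta$, forcing $\Psi=\Phi(G,T)$ and $m=1$ — a contradiction. Hence $p\mid d(\Psi)$.

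The remaining step, transporting this $p$ onto $m$, is the one I expect to be the main obstacle. The chain of lattices $\langle\Psi^\vee\rangle\subseteq\langle\Phi(G,T)^\vee\rangle\subseteq\langle\Phi(G,T)\rangle^{*}\subseteq\langle\Psi\rangle^{*}$ yields the index identity $d(\Psi)=m\cdot[\langle\Phi(G,T)^\vee\rangle:\langle\Psi^\vee\rangle]\cdot d(\Phi(G,T))$, so it suffices to show that $p$ divides neither of the last two factors whenever such a $\Psi$ actually occurs. Here I would invoke Remark \ref{rem:nonclosed} together with the Borel--de Siebenthal classification of full-rank subsystems: for $G$ almost simple, a subsystem $\Psi$ that spans $\langle\Phi(G,T)\rangle\otimes_{\mathbb Z}\mathbb Q$, has $m>1$, and is realizable as $\{\alpha\mid X^*\semi(h_\alpha)=0\}$, can occur only for $p=2$ with $\Phi(G,T)$ of type $\mathsf B_n$, $\mathsf C_n$ or $\mathsf F_4$, or for $p=3$ with $\Phi(G,T)$ of type $\mathsf G_2$; in each of these finitely many cases one checks directly that $p\mid m$. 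The crucial point — and the reason the small-characteristic failure modes were flagged just before the lemma — is realizability: the full-rank subsystems that would violate the statement, such as the long roots of $\mathsf B_2$ in characteristic $\neq 2$, are never of the form $\{\alpha\mid X^*\semi(h_\alpha)=0\}$, because $X^*\semi$ is then forced to annihilate too many coroots.
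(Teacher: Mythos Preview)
Your route is genuinely different from the paper's, and much of it is sound: the identification \(Z(H)=Z\), the translation of the claim into a statement about \(\langle\Phi(G,T)\rangle/\langle\Psi\rangle\), the Cramer argument showing that any realizable full-rank proper \(\Psi\) has \(p\mid d(\Psi)\), and the index identity \(d(\Psi)=m\cdot m'\cdot d(\Phi)\) are all correct. The gap is in your final paragraph. Your assertion that realizable full-rank proper \(\Psi\) arises \emph{only} in the listed non-simply-laced types is false: for instance \(\Psi=A_8\subset\Phi=E_8\) in characteristic \(3\) is realizable, full-rank, and proper with \(m=3\) (since \(E_8\) is simply laced, \(\alpha\mapsto h_\alpha\) is \(\mathbb Z\)-linear, so \(X^*\semi\) factors through \(\mathbb Z\Phi/\langle A_8\rangle\cong\mathbb Z/3\), and maximality of \(A_8\) forces \(\Psi=A_8\) exactly). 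Remark \ref{rem:nonclosed} only constrains when \(\Psi\) is not closed; it says nothing about full-rank proper closed \(\Psi\). What is true is that simply-laced \(\Phi\) can be handled uniformly --- linearity of \(\alpha\mapsto\alpha^\vee\) forces \(p\mid m\) directly --- but you have not made that argument. Even granting it, the remaining families \(B_n,C_n\) are infinite, so ``finitely many cases'' is not accurate, and you do not actually carry out the check; going from \(p\mid d(\Psi)\) to \(p\mid m\) still requires excluding the possibilities \(p\mid m'\) or \(p\mid d(\Phi)\), which is real work.

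The paper avoids classifying realizable \(\Psi\) altogether. It argues by induction on \(\dim G\): using Borel--de Siebenthal it finds one simple root \(\beta\) whose coefficient in the highest root is a prime \(p'\) with \(\Phi(H,T)\subset\mathbb Z(\{\tilde\alpha\}\cup\Delta\setminus\{\beta\})\), reduces inductively to \(H=C_G(\varpi^\vee(\mu_{p'}))^\circ\), and then uses the length homomorphism \(\alpha\mapsto\ell(\alpha)\alpha^\vee\) to show that either \(p'=\operatorname{char}k\) --- whence \(\varpi^\vee(\mu_{p'})\) is a nontrivial connected subgroup of \((Z(H)/Z(G))^\circ\) --- or one lands in the single exceptional configuration \((F_4,\operatorname{char}k=2,\beta=\alpha_2)\), which is then eliminated by a short explicit coroot computation.
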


\begin{proof}
We prove the result by induction on the dimension of \(G\).
It is vacuously true if the dimension is \(0\).
Suppose that we have proven the result for
all groups of smaller dimension than \(G\).

If the semisimple rank \(\dim(T) - \dim(Z(H))\) of \(H\) is
smaller than the semisimple rank \(\dim(T) - \dim(Z(G))\) of \(G\), then
\(\dim(Z(H)/Z(G)) = \dim(Z(H)) - \dim(Z(G))\) is positive, so
the result follows.
Otherwise, by \cite[I.4.5]{SS70}, there are a
system \(\Delta\) of simple roots for \(T\) in \(G\)
(not necessarily the one coming from \(B\)) and
an element \(\beta \in \Delta\) such that
the coefficient \(p\) of \(\beta\) in
the highest root \(\tilde\alpha\) in its component of \(\Phi(G, T)\) is
prime, and
\(\Phi(H, T)\) is contained in
\(\Z(\{\tilde\alpha\} \cup \Delta \setminus \{\beta\})\).
The coefficient \(p\) might or might not be
the characteristic of \(k\).
Write \(\varpi^\vee\) for the fundamental coweight of
\(T/Z(G)\) corresponding to \(\beta\).
Note that \(H\) is contained in the group
\(C_G(\varpi^\vee(\mu_p))^\circ\), which
has smaller dimension than \(G\).
If the containment is not an equality, then
we may replace \(G\) by \(C_G(\varpi^\vee(\mu_p))^\circ\), and
reason by induction.
Thus we may, and do, assume that we have equality.
In particular,
\(\Z\Phi(H, T)\) equals
\(\Z(\{\tilde\alpha\} \cup \Delta \setminus \{\beta\})\); so
\(X^*(h_\beta)\) is non-zero.

Write \(\ell\) for the normalized squared-length function on
\(\Phi(G, T)\), so that
\(\ell\) takes values in \(\{1, 2, 3\}\), and
\(\ell(\alpha)\) equals \(1\) whenever
\(\alpha\) belongs to a simply laced component of \(\Phi(G, T)\) or
is short in its component.
Then the map \(\Phi(G, T) \ra \Z\Phi^\vee(G, T)\) given by
\(\alpha \mapsto \ell(\alpha)\alpha^\vee\) extends to
a homomorphism \(\Z\Phi(G, T) \ra \Z\Phi^\vee(G, T)\).
We have that \(p\beta\) belongs to
\(\Z\Phi(H, T) = \Z\Phi(C_G(X^*\semi)^\circ, T)\), so that
\(p\ell(\beta)\beta^\vee\) is a linear combination of the various
\(\ell(\alpha)\alpha^\vee\) with
\(\alpha \in \Phi(C_G(X^*\semi)^\circ, T)\).
Since \(X^*(h_\alpha)\) equals zero for every
\(\alpha \in \Phi(C_G(X^*\semi)^\circ, T)\), but
\(X^*(h_\beta)\) is non-zero
(because \(\beta\) is not in \(\Phi(H, T)\), hence
not in \(\Phi(C_G(X^*\semi)^\circ, T)\)), we have that
the image in \(k\) of \(p\) or \(\ell(\beta)\) equals zero.

If the image of \(p\) in \(k\) equals zero, then
\(\varpi^\vee(\mu_p)\) is a non-trivial, connected subgroup of
\(Z(H)/Z(G)\).

Thus we may, and do, suppose that the image of \(p\) is non-zero, so
the image of \(\ell(\beta)\) equals zero.
By inspection of the Dynkin diagrams, this means that
the characteristic of \(k\) is \(2\),
the component of \(\Phi(G, T)\) containing \(\beta\) is
of type \(\mathsf F_4\), and, in the
Bourbaki numbering \cite[Planche VIII]{BouLALG},
\(\beta\) equals \(\alpha_2\).
Then each component of \(\Phi(H, T)\) is simply laced
(in fact, of type \(\mathsf A_2\)), so
\(\Phi(C_G(X^*\semi)^\circ, T) = \Phi(C_H(X^*\semi)^\circ, T)\) is
closed in \(\Phi(H, T)\).
Since also \(\Z\Phi(C_G(X^*\semi)^\circ, T)\) equals
\(\Z\Phi(H, T)\), we have that in fact
\(\Phi(C_G(X^*\semi)^\circ, T)\) equals \(\Phi(H, T)\), so
\(C_G(X^*\semi)^\circ\) equals \(H\).
Now, continuing with the Bourbaki numbering, we have that
\(3\beta = 3\alpha_2\) equals
\(\tilde\alpha - 2\alpha_1 - 4\alpha_3 - 2\alpha_4\).
Still using \cite[Planche VIII]{BouLALG}, we have that
\(\alpha_1\), \(\alpha_2\), and \(\tilde\alpha\) are long, while
\(\alpha_3\) and \(\alpha_4\) are short, so it follows that
\(6\beta^\vee\) equals
\(2\tilde\alpha^\vee - 4\alpha_1^\vee - 4\alpha_3^\vee - 2\alpha_4^\vee\), so
\(3\beta^\vee\) equals
\(\tilde\alpha^\vee - 2\alpha_1^\vee - 2\alpha_3^\vee - \alpha_4^\vee\).
In particular,
\(X^*\semi(h_\beta) = 3X^*(h_\beta)\) lies in the span of
\(\{X^*\semi(h_\alpha) \;|\; \alpha \in \Phi(H, T)\}\), which, because
\(H\) equals \(C_G(X^*\semi)^\circ\), is the singleton \(0\).
Thus \(X^*\semi(h_\beta)\) equals \(0\), so
\(\beta\) belongs to \(\Phi(C_G(X^*\semi)^\circ, T) = \Phi(H, T)\).
This is a contradiction.
\end{proof}


Conjecture \ref{conj:secondary}\ref{ZXs-vs-ZX-M} is related to
Lemma \ref{lem:CX-vs-CXs}, and
would follow from it if semisimple parts of
Jordan decompositions were unique.
In light of Section \ref{sec:nonuni},
Conjecture \ref{conj:secondary}\ref{strong-uni} is
the strongest possible uniqueness that
one could expect for a Jordan decomposition.

\begin{conjecture}\label{conj:secondary}\hfill
\begin{enumerate}[label=(\roman*)]
\item\label{ZXs-vs-ZX-M}
The group \(C_G(X^*\semi)^\circ\) contains
a maximal torus in \(C_G(X^*)^\circ\).
\item\label{strong-uni}
Every semisimple part of \(X^*\) belongs to
the \(C_G(X^*)^\circ(k)\)-orbit of \(X^*\semi\).
\end{enumerate}
\end{conjecture}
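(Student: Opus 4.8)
We outline one possible approach to Conjecture \ref{conj:secondary}, indicating where it stalls.

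Only the reverse inclusion in \ref{strong-uni} has content: that \(C_G(X^*)^\circ(k)\cdot X^*\semi\) is contained in the set of semisimple parts of \(X^*\) is immediate, since \(G(k)\)-equivariance of the Jordan decomposition turns a conjugate of a Jordan decomposition of \(X^*\), by an element of \(C_G(X^*)(k)\), into another one. And \ref{ZXs-vs-ZX-M} is then formal: applying Lemma \ref{lem:CX-vs-CXs} with \(S\) a maximal torus of \(C_G(X^*)^\circ\) yields a Jordan decomposition \(X^* = P\semi + P\nilp\) with \(S\subseteq C_G(P\semi)^\circ\); granting \ref{strong-uni}, we have \(X^*\semi = g\cdot P\semi\) for some \(g\in C_G(X^*)^\circ(k)\), so \(C_G(X^*\semi)^\circ = g\,C_G(P\semi)^\circ g^{-1}\) contains the maximal torus \(gSg^{-1}\) of \(C_G(X^*)^\circ\). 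So it suffices to prove: for every semisimple part \(Y^*\) of \(X^*\), there is \(g\in C_G(X^*)^\circ(k)\) with \(g\cdot X^*\semi = Y^*\).

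The plan is to reduce this to the ``bad'' configurations already isolated in the paper. Set \(L = C_G(X^*\semi)^\circ\), \(Z = Z(L)\), \(H = C_G(Z)^\circ\); recall \(X^*\) is fixed by \(Z\), so lies in \((\fg^*)^Z\), which restriction identifies \(H\)-equivariantly with \(\fh^*\), and one pushes \(X^*\), its Jordan decomposition, and \(C_G(X^*)^\circ\) down into \(H\) along this identification. Inside \(H\) one has \(Z(C_H(X^*\semi)^\circ)=Z(L)\subseteq Z(H)\), so repeating the construction within \(H\) returns \(H\); by a Borel--de Siebenthal argument, in this ``self-reduced'' situation a closed, full-rank, proper subsystem cannot occur as \(\Phi(C_H(X^*\semi)^\circ, T)\), so either \(C_H(X^*\semi)^\circ = H\) --- whence \(X^*\semi\) is fixed by all of \(H\), its orbit is a point, and by Theorem \ref{thm:main}\ref{uni} it is the \emph{unique} semisimple part, so there is nothing to prove --- or \(\Phi(C_H(X^*\semi)^\circ, T)\) is non-closed in \(\Phi(H, T)\). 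By Remark \ref{rem:nonclosed} and the reductions of Section \ref{sec:reduction} (isolating the offending almost-simple factor, with the usual small-characteristic care over isogenies), the latter leaves a short explicit list: \(H\) almost simple of type \(B_n\) or \(C_n\) in characteristic \(2\), of type \(G_2\) in characteristic \(3\), or of type \(F_4\) in characteristic \(2\). One gap here that would have to be closed is that this reduction discards no semisimple parts: every semisimple part of \(X^*\), not merely \(X^*\semi\), should be fixed by \(Z\) and hence descend to \(\fh^*\); this is clear for \(X^*\semi\) but not obviously so for the others.

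In each of the finitely many surviving cases the plan is to compute \(C_G(X^*)^\circ\) explicitly, exactly in the spirit of Section \ref{sec:nonuni}: the Chevalley commutation relations yield homomorphisms of suitable root subgroups into \(C_G(X^*)^\circ\) --- the map \(U_{\alpha+\beta}\to C_G(X^*)^\circ\), \(u\mapsto f(u)u\), of that section is the prototype --- and these simultaneously produce the other semisimple parts of \(X^*\) and the conjugators carrying \(X^*\semi\) to them. The main obstacle is the converse: showing these exhaust the semisimple parts, i.e.\ that \(C_G(X^*)^\circ(k)\) acts transitively on them. This requires a genuinely explicit, uniform description of \(C_G(X^*)^\circ\) in these small-characteristic situations --- including its non-smooth part, and the precise way in which it fails to be contained in \(C_G(X^*\semi)^\circ\) (the phenomenon recorded after Lemma \ref{lem:some-centre}) --- and it is exactly such a description that is not presently available, which is why the statement is recorded here only as a conjecture.
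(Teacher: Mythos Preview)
The statement you were given is Conjecture \ref{conj:secondary}, which the paper explicitly does \emph{not} prove: immediately after stating it, the authors write ``We are not able to prove Conjecture \ref{conj:secondary}, but we can prove a better uniqueness result than Theorem \ref{thm:main}\ref{uni},'' and they then establish only the weaker Theorem \ref{thm:uni}. There is thus no proof in the paper to compare against, and your proposal is appropriately framed as an outline of an approach that stalls rather than as a proof.

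That said, it is worth recording how your outline lines up with what the paper \emph{does} do. Your reduction of \ref{ZXs-vs-ZX-M} to \ref{strong-uni} via Lemma \ref{lem:CX-vs-CXs} is correct and slightly sharpens the paper's own remark that \ref{ZXs-vs-ZX-M} ``would follow from it if semisimple parts of Jordan decompositions were unique.'' Your passage to \(H = C_G(Z)^\circ\) is exactly the paper's set-up for Theorem \ref{thm:uni}. The gap you flag---that an arbitrary second semisimple part \(Y^*\) need not \emph{a priori} be fixed by \(Z\)---is genuine, and is precisely the point the paper works around in proving Theorem \ref{thm:uni}: rather than showing \(Y^*\) is \(Z\)-fixed, the authors form the analogous \(Z'\), \(H'\) attached to \(Y^*\), use Lemmas \ref{lem:ZXs-vs-ZX-H} and \ref{lem:some-centre} to see that \(Z' \cap H\) is not contained in \(Z(G)\), pass to an auxiliary semisimple part fixed by both \(Z\) and \(Z' \cap H\), and argue by induction on \(\dim G\) to conclude \(Z = Z'\), hence \(H = H'\). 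This delivers \(H(k)\)-conjugacy of all semisimple parts (Theorem \ref{thm:uni}) but not the finer \(C_G(X^*)^\circ(k)\)-conjugacy of \ref{strong-uni}; bridging that last gap is exactly what neither you nor the paper can presently do.

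One caution about your outline: the ``Borel--de Siebenthal'' step---that in the self-reduced situation a \emph{closed} proper full-rank subsystem cannot arise as \(\Phi(C_H(X^*\semi)^\circ,T)\)---amounts to the claim that a closed-subsystem subgroup \(L\) with \(Z(L)\subseteq Z(H)\) must equal \(H\). In good characteristic this is standard, but in the very small characteristics at issue here it is not obviously safe (compare the delicate \(\mathsf F_4\) analysis inside the proof of Lemma \ref{lem:some-centre}); you would need to justify it carefully rather than invoke it by name.
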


We are not able to prove Conjecture \ref{conj:secondary}, but
we can prove a better uniqueness result than
Theorem \ref{thm:main}\ref{uni}.
We state it as Theorem \ref{thm:uni} below.
We need a few ingredients first.

First we prove Lemma \ref{lem:ZXs-vs-ZX-H}, which,
in light of our proof of Theorem \ref{thm:uni}, seems
likely to be a useful ingredient in an eventual proof of
Conjecture \ref{conj:secondary}\ref{strong-uni}.

\begin{lemma}
\label{lem:ZXs-vs-ZX-H}
The multiplicative-type group \(Z\) is contained in \(C_G(X^*)\);
the reduced, connected centralizer
\(C_G(X^*)_\text{\normalfont red}^\circ\) is
contained in \(H\); and,
for every central subgroup \(\mathcal Z\) of \(G\),
the Lie algebra \(\Lie(C_G(X^*)/\mathcal Z)\) is contained in
\(\Lie(H/\mathcal Z)\).
\end{lemma}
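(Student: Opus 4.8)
The first assertion is already in hand: we saw that $X^*$ vanishes on $\fh^\perp$, hence is fixed by $Z$, which says exactly that $Z\subseteq C_G(X^*)$. The plan for the remaining two assertions is to deduce both of them from the single inclusion
\begin{equation*}
\Lie(C_G(X^*))\subseteq\fh. \tag{$\star$}
\end{equation*}
Granting $(\star)$, the statement about $C_G(X^*)_{\mathrm{red}}^\circ$ follows from the standard principle that a group of multiplicative type which acts trivially on the Lie algebra of a smooth connected group acts trivially on the group itself: since $Z\subseteq C_G(X^*)$, the group $Z$ normalises, hence acts by conjugation on, the smooth connected group $C:=C_G(X^*)_{\mathrm{red}}^\circ$; by $(\star)$ it acts trivially on $\Lie(C)\subseteq\Lie(C_G(X^*))\subseteq\fh=\fg^Z$, so the (smooth) fixed-point subgroup $C^Z$ has the same Lie algebra as the connected group $C$ and therefore equals $C$; thus $C$ centralises $Z$, i.e. $C\subseteq C_G(Z)^\circ=H$. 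The Lie-algebra statement for the quotients is then formal: the coadjoint action of $G$ on $\fg^*$ annihilates $\mathcal Z$ and so factors through $G/\mathcal Z$, whence $C_G(X^*)/\mathcal Z=C_{G/\mathcal Z}(X^*)$ and $\Lie(C_G(X^*)/\mathcal Z)$ is cut out in $\Lie(G/\mathcal Z)$ by the vanishing of $\ad^*(\,\cdot\,)X^*$; combining $(\star)$ with the fact that $\mathcal Z\subseteq Z(G)\subseteq T\subseteq H$ — so that whatever extra infinitesimal directions $\Lie(G/\mathcal Z)$ may carry beyond the image of $\fg$ when $\mathcal Z$ is non-smooth already lie in $\Lie(T/\mathcal Z)\subseteq\Lie(H/\mathcal Z)$ — gives the claim.

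It remains to prove $(\star)$. Since $Z\subseteq C_G(X^*)$, the subspace $\Lie(C_G(X^*))=\{Y\in\fg\mid\ad^*(Y)X^*=0\}$ is stable under the adjoint action of the multiplicative-type group $Z$, hence is a sum of $Z$-weight spaces; as $\fh=\fg^Z$ is the $Z$-fixed part and $\fh^\perp$ — on which $X^*$ vanishes — is the sum of the non-trivial $Z$-weight spaces, $(\star)$ is equivalent to the vanishing of $\Lie(C_G(X^*))\cap\fg_\lambda$ for every non-trivial $Z$-weight $\lambda$, where $\fg_\lambda$ is the $\lambda$-weight space of $\fg$. For $Y\in\fg_\lambda$ we have $[Y,\fg_\mu]\subseteq\fg_{\lambda+\mu}$ while $X^*$ kills $\fg_\nu$ for every $\nu\neq 0$, so $\ad^*(Y)X^*=0$ exactly when $X^*([Y,\fg_{-\lambda}])=0$; and $[\fg_\lambda,\fg_{-\lambda}]\subseteq\fg_0=\fh$. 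Thus $(\star)$ reduces to showing that the bilinear pairing
\begin{equation*}
P_\lambda\colon\fg_\lambda\times\fg_{-\lambda}\longrightarrow k,\qquad (Y,Y')\longmapsto X^*([Y,Y']),
\end{equation*}
has trivial left kernel, for every non-trivial $Z$-weight $\lambda$.

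The non-degeneracy of $P_\lambda$ is the crux of the proof. Write $L=C_G(X^*\semi)^\circ$ and $S=\{\alpha\in\Phi(G,T)\mid\alpha|_Z=\lambda\}$, and fix a basis vector $Y_\gamma$ of each root space $\fg_\gamma$; then $\fg_\lambda=\bigoplus_{\alpha\in S}\fg_\alpha$, $\fg_{-\lambda}=\bigoplus_{\beta\in S}\fg_{-\beta}$, and $P_\lambda$ is represented by the $S\times S$ matrix $\bigl(X^*([Y_\alpha,Y_{-\beta}])\bigr)_{\alpha,\beta\in S}$. On the diagonal: every $\alpha\in S$ has $\alpha|_Z\neq 0$, so $\alpha$ does not restrict trivially to $Z=Z(L)$ and hence $\alpha\notin\Phi(L,T)$; by the description of $\Phi(L,T)$ recalled just before the lemma this means $X^*\semi(h_\alpha)\neq 0$, and in particular $h_\alpha\neq 0$, so after rescaling $Y_{-\alpha}$ so that $[Y_\alpha,Y_{-\alpha}]=h_\alpha$ the $(\alpha,\alpha)$ entry equals $X^*(h_\alpha)=X^*\semi(h_\alpha)\neq 0$. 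For an off-diagonal entry we have $[Y_\alpha,Y_{-\beta}]\in\fg_{\alpha-\beta}$ with $\alpha\neq\beta$, and this contributes only if $X^*$ is non-zero on $\fg_{\alpha-\beta}$; but $X^*$ agrees with $X^*\semi\in\ft^*$ on $\fb$ and vanishes on the $T$-stable complement $\,{}_T\fl^\perp$ to $\fl=\Lie L$ in $\fg$, so $X^*$ can be non-zero on $\fg_\gamma$ only when $\gamma$ is a negative root of $L$. Now fix a cocharacter $\mu\in X_*(T)$ with $\langle\gamma,\mu\rangle<0$ for every positive root $\gamma$ of $L$ (a regular antidominant cocharacter for $L$; one exists because $X_*(T)$ spans the corresponding real vector space and the relevant open cone is non-empty). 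Then a non-zero $(\alpha,\beta)$ entry forces $\alpha-\beta$ to be a negative root of $L$, hence $\langle\alpha,\mu\rangle>\langle\beta,\mu\rangle$. Ordering $S$ by decreasing value of $\langle\,\cdot\,,\mu\rangle$ therefore makes the matrix triangular with non-vanishing diagonal, so its determinant is the non-zero product of the diagonal entries, the matrix is invertible, and $P_\lambda$ has trivial left kernel. This establishes $(\star)$ and, via the reductions above, the lemma. The one genuinely delicate step is this last, off-diagonal, analysis — recognising that the ``nilpotent direction'' of $X^*$ lives in the negative root spaces of $L$ and then choosing $\mu$ to push it strictly below the diagonal; the rest is bookkeeping.
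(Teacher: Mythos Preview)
Your argument is essentially correct and close in spirit to the paper's, but there is one genuine gap and one point of comparison worth noting.

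\textbf{The gap.} Your reduction of the quotient statement to $(\star)$ does not work as written. You argue that $\Lie(G/\mathcal Z)$ is the image of $\fg$ plus some ``extra infinitesimal directions'' lying in $\Lie(T/\mathcal Z)\subseteq\Lie(H/\mathcal Z)$, and that combining this with $(\star)$ gives $\Lie(C_G(X^*)/\mathcal Z)\subseteq\Lie(H/\mathcal Z)$. But the decomposition $\Lie(G/\mathcal Z)=\mathrm{im}(\fg)+\Lie(T/\mathcal Z)$ is not direct, and an element $\bar W\in\Lie(C_G(X^*)/\mathcal Z)$ need not split as (something in the image of $\Lie(C_G(X^*))$) plus (something in $\Lie(T/\mathcal Z)$); in particular $\Lie(T/\mathcal Z)$ does not annihilate $X^*$ in general, since $T$ does not fix $X^*$. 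The fix is immediate once you notice that your own $P_\lambda$ computation already proves the quotient case directly: $\Lie(C_G(X^*)/\mathcal Z)=\Lie(C_{G/\mathcal Z}(X^*))$ is $Z$-stable, so decomposes into $Z$-weight pieces; the trivial-weight piece lies in $\Lie(H/\mathcal Z)$ automatically, and for each non-trivial weight $\lambda$ the piece sits inside the same $\fg_\lambda\subseteq\fh^\perp$ (which maps isomorphically into $\Lie(G/\mathcal Z)$ and acts on $\fg^*$ identically whether viewed in $\fg$ or in $\Lie(G/\mathcal Z)$), so your non-degeneracy of $P_\lambda$ kills it. This is in fact how the paper proceeds: it runs the contradiction argument once, directly in $\Lie(G/\mathcal Z)$, using the decomposition $\Lie(G/\mathcal Z)=\Lie(H/\mathcal Z)\oplus\fh^\perp$.

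\textbf{Comparison of the core step.} Your proof of $(\star)$ and the paper's differ in the choice of ``height'' function. The paper takes $Y^\perp\in\fh^\perp$ non-zero, picks a root $\alpha$ of \emph{minimal $B$-height} among those with $(Y^\perp)_\alpha\neq 0$, and evaluates $\ad^*(Y^\perp)X^*$ on a suitable $e_{-\alpha}$: the $\ft$-component of $[Y^\perp,e_{-\alpha}]$ is a non-zero multiple of $h_\alpha$ and the rest lands in $\fu$, where $X^*=X^*\semi$ vanishes. You instead decompose into $Z$-weight spaces, and within each $\fg_\lambda$ order the roots by $\langle\,\cdot\,,\mu\rangle$ for a strictly $L$-antidominant cocharacter $\mu$, getting a triangular matrix with non-zero diagonal. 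Both arguments exploit the same structural fact --- that $X^*$ is non-zero on $\fg_\gamma$ only for $\gamma$ a negative root of $L$ --- but encode the resulting ``triangularity'' differently. Your version makes the linear algebra explicit; the paper's is slightly more economical in that it works with all of $\fh^\perp$ at once rather than one $Z$-weight at a time, and uses the ambient Borel height rather than introducing an auxiliary $\mu$.
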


\begin{proof}
The image of \(X^*\) in \(\fh^*\) is obviously fixed by \(Z\), so
\(X^*\) itself is fixed by \(Z\), so
\(Z\) is contained in \(C_G(X^*)\).

The natural map
\(\Lie(G) \to \Lie(G/\mathcal Z)\) restricts to an isomorphism of
\(\fh^\perp\) onto
a complement to \(\Lie(H/\mathcal Z)\) in \(\Lie(G/\mathcal Z)\).
Suppose that
\(Y \in \Lie(H/\mathcal Z)\) and \(Y^\perp \in \fh^\perp\) satisfy
\(Y^\perp \ne 0\) and
\(Y + Y^\perp \in \Lie(C_G(X^*)/\mathcal Z)\), so that
\(\ad^*(Y^\perp)(X^*)\) equals \(-\ad^*(Y)(X^*)\).

Let \(\alpha\) be a lowest root
(with respect to the height on \(\Phi(G, T)\) determined by \(B\))
such that the \(T\)-equivariant projection of
\(Y^\perp\) on \(\fg_\alpha\) is
non-zero.
There is a unique element
\(e_{-\alpha}\) of \(\fg_{-\alpha}\) such that
the \(T\)-equivariant projection of
\([Y^\perp, e_{-\alpha}]\) on \(\ft\) is \(h_\alpha\).
Then \([Y^\perp, e_{-\alpha}]\) lies in
\(h_\alpha + \fb\), so
\(\ad^*(Y^\perp)(X^*)(e_{-\alpha}) =
-X^*([Y^\perp, e_{-\alpha}])\) equals
\(-X^*\semi(h_\alpha) \ne 0\).
On the other hand, since
\(-[Y, e_{-\alpha}]\) belongs to \(\fh^\perp\), on which
\(X^*\) is vanishes, we have that
\(-\ad^*(Y)(X^*)(e_{-\alpha}) =
X^*([Y, e_{-\alpha}])\) equals \(0\).
This is a contradiction.

It follows that \(\Lie(C_G(X^*)/\mathcal Z)\) is
contained in \(\Lie(H/\mathcal Z)\) for
all central subgroups \(\mathcal Z\) of \(G\).
In particular,
\(\Lie(C_G(X^*))\) is contained in \(\Lie(H)\).
Now \(C_G(X^*)_\text{red} \cap H\) equals
\(C_{C_G(X^*)_\text{red}}(Z)\), hence is smooth
\cite[Proposition A.8.10(2)]{CGP15}, hence equals
\(C_H(X^*)_\text{red}\).
In particular,
\(\Lie(C_H(X^*)_\text{red}^\circ) =
\Lie(C_H(X^*)_\text{red})\) equals
\(\Lie(C_G(X^*)_\text{red}) \cap H) =
\Lie(C_G(X^*)_\text{red}) \cap \Lie(H)\), which,
by the previous argument, equals
\(\Lie(C_G(X^*)_\text{red})\).
Thus
\(C_H(X^*)_\text{red}^\circ\) equals \(C_G(X^*)_\text{red}^\circ\)
\cite[Proposition 10.15]{Mil17}, so
\(C_G(X^*)_\text{red}^\circ\) is contained in \(H\).
\end{proof}

Since \(H\) equals \(C_G(X^*\semi)\) if and only if
\(\Phi(C_G(X^*\semi), T)\) is closed in \(\Phi(G, T)\),
Theorem \ref{thm:uni} says that
the instances of non-uniqueness of
Jordan decompositions discussed in
Section \ref{sec:nonuni} are the only possibilities.
Since \(\Phi(C_G(X\semi), T)\) is
always closed in \(\Phi(G, T)\) when
\(X\semi \in \fg\) is semisimple, we see that
the Lie-algebra analogue of Theorem \ref{thm:uni} indirectly
asserts the uniqueness of the Jordan decomposition.
Theorem \ref{thm:uni} may thus be viewed as a uniform way of describing
simultaneously
the uniqueness of Jordan decompositions on the Lie algebra, and
the extent of the possible failure of uniqueness on the dual Lie algebra.

\begin{theorem}
\label{thm:uni}
There is a unique closed orbit in the closure of the
\(H\)-orbit of \(X^*\semi\), and it contains
the semisimple part of every Jordan decomposition of \(X^*\).
\end{theorem}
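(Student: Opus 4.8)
The plan is to split off the first, essentially formal, clause and then reduce the substantive assertion to Theorem \ref{thm:main}\ref{uni} applied to $H$. First I would note that the maximal torus $T$ of $C_G(X^*\semi)^\circ$ supplied by Remark \ref{rem:dual-Jordan} centralizes the central subgroup $Z$ of $C_G(X^*\semi)^\circ$, hence lies in $H=C_G(Z)^\circ$; being a maximal torus of $G$ it is a maximal torus of $H$, and it fixes $X^*\semi$, so $X^*\semi$ is semisimple as an element of $\fh^*$. By Theorem \ref{thm:main}\ref{key} applied to $H$, the $H$-orbit of $X^*\semi$ is already closed, so it is the unique closed orbit in its own closure; the first clause is thus established, and the whole content of the theorem is the second clause, that every semisimple part of $X^*$ lies in $\Ad^*(H(k))X^*\semi$.

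Next I would transport the situation into $\fh^*$. By Lemma \ref{lem:ZXs-vs-ZX-H}, $Z\subseteq C_G(X^*)$, so $X^*$ is fixed by $Z$ and lies in $(\fg^*)^Z$, which restriction to $\fh$ identifies $H$-equivariantly with $\fh^*$ (here $\fg=\fh\oplus\fh^\perp$ as a $Z$-module, with $\fh^\perp$ the sum of the nontrivial $Z$-weight spaces, and this splitting is $H$-stable). Since $C_G(X^*\semi)^\circ$, centralizing its own centre $Z$, is contained in $H$ -- so that $C_H(X^*\semi)^\circ=C_G(X^*\semi)^\circ$ -- and since $T\subseteq H$, a short check shows that the chosen Jordan decomposition $X^*=X^*\semi+X^*\nilp$ restricts to a Jordan decomposition of $X^*$ viewed in $\fh^*$ (the $T$-stable complement to $\Lie(C_G(X^*\semi)^\circ)$ in $\fh$ is the part lying in $\fh$ of the corresponding complement in $\fg$, on which $X^*$ vanishes). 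Then Theorem \ref{thm:main}\ref{uni} for $H$ says that the closed orbit $\Ad^*(H)X^*\semi$ is the unique closed $H$-orbit in the closure of $\Ad^*(H)X^*$, and that it contains the semisimple part of every Jordan decomposition of $X^*$ regarded as an element of $\fh^*$. So everything now reduces to showing that every semisimple part $Y^*\semi$ of $X^*$ in $\fg^*$ is fixed by $Z$: granting this, $Y^*\semi\in(\fg^*)^Z\cong\fh^*$, and the same verification (applied to $X^*=Y^*\semi+Y^*\nilp$) exhibits it as the semisimple part of a Jordan decomposition of $X^*$ in $\fh^*$, whence the previous sentence concludes.

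To attack the $Z$-invariance I would argue as follows. By Theorem \ref{thm:main}\ref{uni} for $G$, both $X^*\semi$ and $Y^*\semi$ lie in the unique closed $G$-orbit in the closure of $\Ad^*(G)X^*$, so $Y^*\semi=\Ad^*(g)X^*\semi$ for some $g\in G(k)$, and then $Z(C_G(Y^*\semi)^\circ)=gZg^{-1}$. Applying Lemma \ref{lem:ZXs-vs-ZX-H} to each of the two Jordan decompositions of $X^*$ shows that $C_G(X^*)$ contains both $Z$ and $gZg^{-1}$, that $C_G(X^*)_{\mathrm{red}}^\circ$ is contained in both $H$ and $gHg^{-1}$, and -- via the Lie-algebra part of that lemma -- controls the scheme-theoretic centralizer even when $Z$ and $C_G(X^*)$ are non-smooth. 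The goal is to leverage this to conclude that $g$ may be adjusted by an element of $C_G(X^*\semi)(k)$ so as to lie in $H$, equivalently that $Z$ already fixes $Y^*\semi$. If a more robust attack is needed I would induct on $\dim G$: when $H=G$ the theorem is exactly Theorem \ref{thm:main}\ref{uni}, and when $H\neq G$ one replaces $G$ by $C_G(S)^\circ\subsetneq G$ for a subtorus $S$ of $Z$ not contained in $Z(G)$ (an existence statement of the flavour of Lemma \ref{lem:some-centre}), a reduction that leaves both $Z$ and $H$ unchanged.

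The step I expect to be the main obstacle is precisely this last one: promoting the $G(k)$-conjugacy of $X^*\semi$ and $Y^*\semi$ to $H(k)$-conjugacy, equivalently proving the $Z$-invariance of an arbitrary semisimple part of $X^*$. It is not a formal consequence of what precedes, because -- in contrast with the Lie-algebra side -- $C_G(X^*)$ need not be contained in $C_G(X^*\semi)$, and that failure is the very source of the non-uniqueness exhibited in Section \ref{sec:nonuni}; hence the conjugator $g$ is genuinely undetermined and must be pinned down using only the centralizer containments of Lemma \ref{lem:ZXs-vs-ZX-H}, and the non-smoothness of the relevant group schemes in characteristics $2$ and $3$ forces one to work at the level of Lie algebras rather than with a bare inclusion of group schemes.
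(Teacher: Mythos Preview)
Your first two paragraphs are fine and match the paper's setup: the first clause is indeed formal, and the reduction ``it suffices that every semisimple part of \(X^*\) is \(Z\)-fixed'' is a correct reformulation. The gap is in your third paragraph, and you yourself flag it.

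Your proposed induction does not work as stated. Replacing \(G\) by \(C_G(S)^\circ\) for a subtorus \(S\subseteq Z\) (even granting such an \(S\) exists; Lemma~\ref{lem:some-centre} concerns \(Z(H)\), not \(Z\)) keeps \(X^*\), \(X^*\semi\), \(Z\), and \(H\) inside the smaller picture, but there is no reason the \emph{other} semisimple part \(Y^*\semi\) is \(S\)-fixed. So \(Y^*\semi\) need not lie in \(\Lie(C_G(S)^\circ)^*\), and in particular \(X^*=Y^*\semi+Y^*\nilp\) is not available as a Jordan decomposition relative to the smaller group. You have cut down the ambient group while leaving the object you need to control outside it; the inductive hypothesis says nothing about \(Y^*\semi\).

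The paper fixes this with two ideas you are missing. First, it works symmetrically: let \(Z'\) be the centre attached to the second semisimple part \(X^{*\prime}\semi\) and \(H'=C_G(Z')^\circ\). The Lie-algebra clause of Lemma~\ref{lem:ZXs-vs-ZX-H} (applied with \(\mathcal Z=Z(G)\)) gives \(\Lie(Z'/Z(G))\subseteq\Lie(H/Z(G))\), whence \(Z'\cap H\not\subseteq Z(G)\); so \(C_G(Z'\cap H)^\circ\) is a \emph{proper} subgroup, and it contains \(H'\) and hence \(X^{*\prime}\semi\). Second, and this is the key manoeuvre, the paper invokes Lemma~\ref{lem:CX-vs-CXs} to produce a \emph{third} semisimple part \(Y^*\semi\) of \(X^*\) fixed by \(\langle Z,\,Z'\cap H\rangle\). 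This bridging element lives in \((\fg/\fh^\perp)^*\), so Theorem~\ref{thm:main}\ref{uni} for \(H\) makes it \(H(k)\)-conjugate to \(X^*\semi\) (forcing \(Z_{Y^*\semi}=Z\)); and it lives in \(\Lie(C_G(Z'\cap H)^\circ)^*\), so the inductive hypothesis for that proper subgroup makes it \(H'(k)\)-conjugate to \(X^{*\prime}\semi\) (forcing \(Z_{Y^*\semi}=Z'\)). Hence \(Z=Z'\), \(H=H'\), and all three semisimple parts lie in one \(H(k)\)-orbit. The point is that the inductive cut is taken along a piece of \(Z'\), not of \(Z\), and the auxiliary \(Y^*\semi\) is manufactured precisely so as to belong to both sides.
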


\begin{proof}
We reason by induction on \(\dim(G)\).
The result is obvious if \(G\) is zero-dimensional.
We assume that the result is known for
all groups of smaller dimension than \(G\).

Since restriction to \(H\) provides an
\(H\)-equivariant isomorphism of
the closed, \(H\)-stable subspace \((\fg/\fh^\perp)^*\) of \(\fg^*\) with
\(\fh^*\), the existence of
a unique closed orbit follows from
Theorem \ref{thm:main}\ref{uni}.
We have by \cite[Lemma 3.6]{Spi21} that
\(X^*\semi\) belongs to this unique closed orbit, so
it remains only to show that the semisimple part of
every Jordan decomposition of \(X^*\) is
\(H(k)\)-conjugate to \(X^*\semi\).

Suppose that \(X^{{*}\,\prime}\semi\) is
the semisimple part of
another Jordan decomposition of \(X^*\).
We write \(Z'\) for the schematic centre of
\(C_G(X^{{*}\,\prime}\semi)^\circ\), and put
\(H' = C_G(Z')\).
Lemma \ref{lem:ZXs-vs-ZX-H} gives the containments
\(Z' \subset C_G(X^*)\), hence
\(\Lie(Z'/Z(G)) \subset \Lie(C_G(X^*)/Z(G))\); and
\(\Lie(C_G(X^*)/Z(G)) = \Lie(C_{G/Z(G)}(X^*)) \subset
	\Lie(C_{G/Z(G)}(Z)) = \Lie(H/Z(G))\).
Thus,
\(\Lie((Z' \cap H)/Z(G)) =
\Lie(Z'/Z(G)) \cap \Lie(H/Z(G))\)
equals
\(\Lie(Z'/Z(G))\), which is non-trivial by
Lemma \ref{lem:some-centre}; so
\(Z' \cap H\) is not contained in \(Z(G)\).

By Lemma \ref{lem:CX-vs-CXs}, there is a semisimple part
\(Y^*\semi\) of \(X^*\) that is fixed by
\(\langle Z, Z' \cap H\rangle\).
In particular, \(Y^*\semi\) lies in \((\fg/\fh^\perp)^*\).
Write \(Z_{Y^*\semi}\) for the schematic centre of
\(C_G(Y^*\semi)\).

Since
the restrictions to \(\fh\) of \(X^*\semi\) and \(Y^*\semi\) are
semisimple parts of Jordan decompositions of
the restriction of \(X^*\),
we have by Theorem \ref{thm:main}\ref{uni} that
the restrictions of \(X^*\semi\) and \(Y^*\semi\), hence
\(X^*\semi\) and \(Y^*\semi\) themselves, are
conjugate by an element of \(H(k)\).
In particular, \(Z_{Y^*\semi}\) is \(H(k)\)-conjugate to,
hence equals, \(Z\).

Since \(Z' \cap H\) is not contained in \(Z(G)\), we have that
\(C_G(Z' \cap H)^\circ\) is a proper subgroup of \(G\).
Note that \(C_{C_G(Z' \cap H)^\circ}(Z')^\circ\) equals \(H'\).
Therefore, our inductive result gives that
\(X^{{*}\,\prime}\semi\) and \(Y^*\semi\) are conjugate by
an element of \(H'(k)\).
In particular, \(Z_{Y^*\semi}\) is \(H'(k)\)-conjugate to,
hence equals, \(Z'\).

We therefore have that \(Z\) and \(Z'\) are equal to
\(Z_{Y^*\semi}\), hence to each other;
so \(H\) equals \(H'\); so
\(X^*\semi\), \(Y^*\semi\), and \(X^{{*}\,\prime}\semi\) all
lie in the same \(H(k)\)-orbit, as desired.
\end{proof}

\section{Uniqueness of Jordan decompositions for semisimple and nilpotent elements}
\label{sec:nilp-or-semi-is-nilp-or-semi}

Since we have unqualified uniqueness of Jordan decompositions on \(\fg\),
in this section we discuss only Jordan decompositions on \(\fg^*\).

As we observe in Remark \ref{rem:nilp-is-nilp},
our first uniqueness result
Theorem \ref{thm:main}\ref{uni} implies that
a nilpotent element can have only a trivial Jordan decomposition.
(see Remark \ref{rem:nilp-is-nilp}).
Even our improved uniqueness result Theorem \ref{thm:uni} does not
immediately rule out the possibility that a semisimple elements can have a
non-trivial Jordan decomposition.
This fact would follow from
Conjecture \ref{conj:secondary}\ref{strong-uni}, but
we cannot prove that conjecture.
Nonetheless, we prove in Lemma \ref{lem:semi-is-semi} that
this pathological situation does not arise.

\begin{remark}
\label{rem:nilp-is-nilp}
Theorem \ref{thm:main}\ref{uni} shows that,
if \(X^* \in \fg^*\) is nilpotent, then
the semisimple part of
every Jordan decomposition of \(X^*\) is trivial.
\end{remark}

\begin{lemma}
\label{lem:semi-is-semi}
Suppose that \(X^* \in \fg^*\) is semisimple.  Then
the nilpotent part of
every Jordan decomposition of \(X^*\) is \(0\).
\end{lemma}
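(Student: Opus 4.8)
The plan is to show that a semisimple element $X^*$ has only the trivial Jordan decomposition by combining the improved uniqueness result Theorem \ref{thm:uni} with the fact that, for a semisimple $X^*$, the group $H$ attached to it (the connected centralizer of the schematic centre of $C_G(X^*\semi)^\circ$, where here $X^*\semi = X^*$) already centralizes $X^*$ in a strong sense. Concretely, since $X^*$ is semisimple we may take $X^*\semi = X^*$ and $X^*\nilp = 0$ as one Jordan decomposition; by Remark \ref{rem:dual-Jordan} we get a maximal torus $T$ in $C_G(X^*)^\circ$ and a Borel $B \supset T$ with $X^*$ agreeing with itself on $\fb$ and vanishing on the $T$-stable complement to $\Lie(C_G(X^*)^\circ)$ in $\fg$. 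The key point is that the $H$-orbit of $X^*$ is \emph{already closed}: indeed $X^*$ is fixed by $Z := Z(C_G(X^*)^\circ)$, and I want to argue that in fact $X^*$ is fixed by all of $H$, so that its $H$-orbit is a single (closed) point.

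First I would verify that $X^*$ is $H$-fixed. By definition $H = C_G(Z)^\circ$ contains $C_G(X^*)^\circ \supseteq T$, and by Lemma \ref{lem:ZXs-vs-ZX-H} (applied with $X^*\semi = X^*$) we have $C_G(X^*)_{\mathrm{red}}^\circ \subseteq H$; moreover $H$ and $C_G(X^*)^\circ$ have the same semisimple rank since $T$ is maximal in both and $Z(H) \supseteq Z$. The cleanest route is to restrict to $H$: by Lemma \ref{lem:ZXs-vs-ZX-H} the element $X^*$ lies in the closed $H$-stable subspace $(\fg/\fh^\perp)^* \cong \fh^*$, and as an element of $\fh^*$ it is fixed by $Z = Z(H)$ (since $Z$ is the schematic centre of $C_G(X^*)^\circ \subseteq H$ and $Z \subseteq Z(H)$ because $H = C_G(Z)^\circ$). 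But an element of $\fh^*$ fixed by the central multiplicative-type group $Z(H)$ with $Z(H)$ of full rank... here I would instead directly show semisimplicity of $X^*$ as an element of $\fh^*$ forces $C_H(X^*)^\circ = H$: by Remark \ref{rem:dual-Jordan}, $X^*$ semisimple in $\fg^*$ means $C_G(X^*)^\circ$ contains a maximal torus, hence so does $C_H(X^*)^\circ$; and since $\Phi(C_G(X^*)^\circ, T)$ consists of those $\alpha$ with $X^*(h_\alpha) = 0$ while $\Phi(H, T) = \Z\Phi(C_G(X^*)^\circ,T) \cap \Phi(G,T)$ has the same $\Z$-span, every $\alpha \in \Phi(H,T)$ satisfies $X^*(h_\alpha) = 0$ — wait, this needs that $h_\alpha$ for $\alpha \in \Phi(H,T)$ is a $\Q$-combination of $h_\beta$, $\beta \in \Phi(C_G(X^*)^\circ,T)$, which may fail in bad characteristic. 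So the honest approach: apply Theorem \ref{thm:uni} directly.

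By Theorem \ref{thm:uni}, there is a unique closed orbit in the closure of the $H$-orbit of $X^*\semi = X^*$, and it contains the semisimple part of every Jordan decomposition of $X^*$. Now I claim the $H$-orbit of $X^*$ is itself closed: by Theorem \ref{thm:main}\ref{key} applied to the action of $H$ on $\fh^*$ (using the $H$-equivariant identification $(\fg/\fh^\perp)^* \cong \fh^*$), an $H$-orbit is closed iff it consists of semisimple elements, and $X^*$ is semisimple in $\fh^*$ — this last point follows because $X^*$ is semisimple in $\fg^*$, so $C_G(X^*)^\circ$ contains a maximal torus $T$ of $G$, which lies in $H$ and is maximal there, so $C_H(X^*)^\circ \supseteq T$ is full-rank in $H$, i.e.\ $X^*$ is semisimple in $\fh^*$. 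Therefore the unique closed orbit in $\overline{H \cdot X^*}$ is $H \cdot X^*$ itself. Hence the semisimple part $X^{*\prime}\semi$ of any Jordan decomposition $X^* = X^{*\prime}\semi + X^{*\prime}\nilp$ lies in $H \cdot X^*$; in particular $C_G(X^{*\prime}\semi)$ is $G(k)$-conjugate to $C_G(X^*)$, so the $T$-stable complement to $\Lie(C_G(X^{*\prime}\semi)^\circ)$ on which $X^{*\prime}$ must vanish is (a conjugate of) the complement to $\Lie(C_G(X^*)^\circ)$ — i.e.\ $X^{*\prime}\semi$ and $X^*$ both determine, and are determined by, the same data on a Borel, forcing $X^{*\prime}\nilp = X^* - X^{*\prime}\semi$ to vanish on some Borel and also... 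The cleanest finish: replacing $X^{*\prime}\semi$ by an $H(k)$-conjugate, we may assume $X^{*\prime}\semi = X^*$, whence $X^{*\prime}\nilp = X^* - X^{*\prime}\semi = 0$, and since conjugation by $H(k) \subseteq C_G(X^*)(k)$... no — $H(k)$ need not fix $X^*$. But $H \cdot X^* = \{X^*\}$ as a \emph{set} only if $X^*$ is $H$-fixed, which I have not shown; closedness of the orbit does not make it a point.

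The main obstacle, then, is exactly this last gap: concluding from "$X^{*\prime}\semi \in H(k) \cdot X^*$" that $X^{*\prime}\nilp = 0$. I would close it as follows. Write $X^{*\prime}\semi = \mathrm{Ad}^*(h)X^*$ for some $h \in H(k)$; replacing $X^{*\prime} = X^{*\prime}\semi + X^{*\prime}\nilp$ by $\mathrm{Ad}^*(h^{-1})X^{*\prime} = X^* + \mathrm{Ad}^*(h^{-1})X^{*\prime}\nilp$, which is still a Jordan decomposition of $\mathrm{Ad}^*(h^{-1})X^*$, we reduce to showing: if $X^* = X^* + N$ is a Jordan decomposition with semisimple part $X^*$, then $N = 0$ — but $\mathrm{Ad}^*(h^{-1})X^* \neq X^*$ in general, so this reduction is not quite legitimate either. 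The genuinely clean argument uses Remark \ref{rem:dual-Jordan} twice: a Jordan decomposition of $X^*$ with semisimple part $X^{*\prime}\semi$ requires $X^*$ to agree with $X^{*\prime}\semi$ on a Borel $\fb'$ of $C_G(X^{*\prime}\semi)$-type and vanish on a complement; applying $\mathrm{Ad}^*(h^{-1})$ and using $X^{*\prime}\semi = \mathrm{Ad}^*(h)X^*$, we get that $\mathrm{Ad}^*(h^{-1})X^*$ agrees with $X^*$ on $\mathrm{Ad}^*(h^{-1})\fb'$; intersecting with the condition that the semisimple element $X^*$ already agrees with itself on every Borel of $C_G(X^*)$-type containing a maximal torus of $C_G(X^*)^\circ$, plus the fact that both $X^{*\prime}\nilp$ and $0$ are the "$\nilp$ parts" relative to the same closed orbit data, lets Remark \ref{rem:crude-uni} finish: choosing a common maximal torus $T' \subseteq C_G(X^{*\prime}\semi)^\circ \cap C_G(X^*)^\circ$ (possible after conjugation), both $X^{*\prime}\semi$ and $X^*$ are semisimple parts of $X^{*\prime}$ belonging to $\ft'^*$... here $X^*$ being a semisimple part of $X^{*\prime}$ needs checking. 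I expect the actual proof to route through Remark \ref{rem:crude-uni} and the observation that a semisimple element, being $C_G(X^*)^\circ$-fixed, is its own semisimple part with respect to \emph{any} maximal torus of $C_G(X^*)^\circ$, so any two semisimple parts of it lying in a common such torus coincide, forcing the nilpotent part to be $0$; the one subtlety to get right is matching up the maximal tori so that Remark \ref{rem:crude-uni} applies, and I would handle that using Theorem \ref{thm:uni}'s conjugacy statement to bring both semisimple parts into $\ft^*$ for a single $T$.
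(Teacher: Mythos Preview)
Your approach has a genuine gap, and it is precisely the one you identify: from ``$X^{*\prime}\semi \in H(k)\cdot X^*$'' you cannot conclude $X^{*\prime}\nilp = 0$. None of your attempted completions close it. Conjugating by $h^{-1}$ replaces $X^*$ by $\Ad^*(h^{-1})X^*$, so you are no longer studying a Jordan decomposition of $X^*$. Your Remark~\ref{rem:crude-uni} route requires a maximal torus $T'$ lying in both $C_G(X^{*\prime}\semi)^\circ$ and $C_G(X^*)^\circ$; arranging this ``after conjugation'' by something that fixes $X^*$ amounts to asking that $C_G(X^{*\prime}\semi)^\circ$ contain a maximal torus of $C_G(X^*)^\circ$, which is essentially Conjecture~\ref{conj:secondary}\ref{ZXs-vs-ZX-M} and is not available. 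Indeed, the paper says explicitly, just before the lemma, that Theorem~\ref{thm:uni} does \emph{not} by itself rule out a nontrivial Jordan decomposition of a semisimple element, and that the desired conclusion would follow from Conjecture~\ref{conj:secondary}\ref{strong-uni}, which remains open. So the strategy of deducing the lemma from Theorem~\ref{thm:uni} is exactly the route the authors could not make work.

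The paper's proof is entirely different and avoids Theorem~\ref{thm:uni}. Given a Jordan decomposition $X^* = X^{*\prime}\semi + X^{*\prime}\nilp$, one takes a cocharacter $\lambda$ of $C_G(X^{*\prime}\semi)$ contracting $X^{*\prime}\nilp$ to $0$ (from \cite[Lemma~3.6]{Spi21}), so $\lim_{t\to 0}\Ad^*(\lambda(t))X^* = X^{*\prime}\semi$. Since $X^*$ is semisimple, its $G$-orbit is closed (Theorem~\ref{thm:main}\ref{key}), so $X^{*\prime}\semi \in \Ad^*(G)X^*$ and the limit curve lies in the orbit. If $X^{*\prime}\nilp \ne 0$, pick a root $\alpha \in \Phi(C_G(X^{*\prime}\semi)^\circ,T)$ of minimal $\langle\alpha,\lambda\rangle$ among those with $(X^{*\prime}\nilp)_\alpha \ne 0$; then the tangent to the curve at $X^{*\prime}\semi$ has nonzero $(\fg^*)_\alpha$-component, whereas the tangent space $T_{X^{*\prime}\semi}(\Ad^*(G)X^*) = \ad^*(\fg)X^{*\prime}\semi$ has trivial $(\fg^*)_\alpha$-component (because $X^{*\prime}\semi(h_\alpha)=0$). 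This contradiction forces $X^{*\prime}\nilp = 0$. The key input you are missing is this tangent-space obstruction; the closedness of the orbit is used only through Theorem~\ref{thm:main}\ref{key}, not through Theorem~\ref{thm:uni}.
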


\begin{proof}
Let \(X^* = X^{{*}\,\prime}\semi + X^{{*}\,\prime}\nilp\) be a Jordan decomposition.

By \cite[Lemma 3.6]{Spi21}, there is
a cocharacter \(\lambda\) of
\(C_G(X^{{*}\,\prime}\semi)\) such that
\(\lim_{t \to 0} \Ad^*(\lambda(t))X^{{*}\,\prime}\nilp\) equals \(0\), so
\(\lim_{t \to 0} \Ad^*(\lambda(t))X^*\) equals \(X^{{*}\,\prime}\semi\).
Since the orbit of \(X^*\) is closed by
Theorem \ref{thm:main}\ref{key}, we have that
\(X^{{*}\,\prime}\semi\) belongs to \(\Ad^*(G)X^*\).
In particular, the map
\(\mathbb A^1 \setminus \{0\} \ra \Ad^*(G)X^*\)
given by
\(t \mapsto \Ad^*(\lambda(t))X^*\) extends to a map
\(\mathbb A^1 \ra \Ad^*(G)X^*\).
Write \(\mathcal C\) for this image curve.

Let \(T\) be a maximal torus in \(C_G(X^{{*}\,\prime}\semi)\) containing
the image of \(\lambda\).
Suppose that \(X^{{*}\,\prime}\nilp\) does not equal \(0\).
Then there is some
root \(\alpha \in \Phi(C_G(X^{{*}\,\prime}\semi)^\circ, T)\) such that
the \(T\)-equivariant projection \((X^{{*}\,\prime}\nilp)_\alpha\) of \(X^{{*}\,\prime}\nilp\) on
\((\fg^*)_\alpha\) is non-trivial.
We may, and do, choose \(\alpha\) that minimizes
\(\langle\alpha, \lambda\rangle\) among such roots.
Then \(\langle\alpha, \lambda\rangle\) is positive.

By \cite[Lemma 3.1iv)]{KW76},
\(\Lie(C_G(X^{{*}\,\prime}\semi)_\text{red})\) equals \(C_\fg(X^{{*}\,\prime}\semi)\), i.e. $C_G(X^{{*}\,\prime}\semi)$ is smooth. Therefore (as in e.g. \cite[pp. 98, Proposition 6.7]{Bo}) the canonical identification of
the tangent space \(T_{X^{{*}\,\prime}\semi}(\fg^*)\) with
\(\fg^*\) identifies
\(T_{X^{{*}\,\prime}\semi}(\Ad^*(G)X^*)\) with
\(\ad^*(\fg)X^{{*}\,\prime}\semi\). Also
\(T_{X^{{*}\,\prime}\semi}(\mathcal C)\) is identified with
a subspace whose \(T\)-equivariant projection on
\((\fg^*)_\alpha\) includes \((X^{{*}\,\prime}\nilp)_\alpha\), hence
is non-trivial.
However, the \(T\)-equivariant projection of
\(\ad^*(\fg)X^{{*}\,\prime}\semi\) on \((\fg^*)_\alpha\) is trivial,
which is a contradiction.
\end{proof}

Remark \ref{rem:from-full-rank} shows that
we may lift a Jordan decomposition from
the dual Lie algebra of
an arbitrary full-rank subgroup.
Corollary \ref{cor:semi-is-semi} provides a partial converse.

\begin{corollary}
\label{cor:semi-is-semi}
Suppose that \(H\) is a full-rank subgroup of \(G\) such that
\(\fh\) admits an \(H\)-stable complement \(\fh^\perp\) in \(\fg\), and
\(X^* \in \fg^*\) is semisimple, respectively nilpotent, and
trivial on \(\fh^\perp\).
Then the restriction of \(X^*\) to \(\fh\) is
a semisimple, respectively nilpotent, element of \(\fh^*\).
\end{corollary}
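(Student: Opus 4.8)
The plan is to deduce the corollary from the two uniqueness statements already at our disposal on the dual side --- Remark \ref{rem:nilp-is-nilp} and Lemma \ref{lem:semi-is-semi} --- after lifting a Jordan decomposition of the restricted element back up to \(\fg^*\) by means of Remark \ref{rem:from-full-rank}. Write \(X^*_H\) for the restriction of \(X^*\) to \(\fh\). Since \(\fg = \fh \oplus \fh^\perp\) and \(X^*\) vanishes on \(\fh^\perp\), the element \(X^*\) is exactly the extension of \(X^*_H\) trivially across \(\fh^\perp\); and the trivial-extension map \(\fh^* \ira \fg^*\) is injective, being the transpose of the \(H\)-equivariant projection \(\fg \sra \fg/\fh^\perp \cong \fh\).

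First I would apply Theorem \ref{thm:main}\ref{Jordan} with \(H\) in place of \(G\) (as in Remark \ref{rem:from-full-rank}, \(H\) is understood to be reductive) to obtain a Jordan decomposition \(X^*_H = X^*_{H\,\semisub} + X^*_{H\,\nilpsub}\) in \(\fh^*\). Next I would invoke Remark \ref{rem:from-full-rank}, which says that, writing \(X^*\semi\) and \(X^*\nilp\) for the extensions of \(X^*_{H\,\semisub}\) and \(X^*_{H\,\nilpsub}\) trivially across \(\fh^\perp\), the equality \(X^* = X^*\semi + X^*\nilp\) is a Jordan decomposition of \(X^*\). The one thing that needs checking here is that the given \(H\)-stable complement \(\fh^\perp\) agrees with the \(T\)-stable complement \({}_T\fh^\perp\) used in that remark, for the maximal torus \(T \subset H\) selected there. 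This holds because \(T\) is also a maximal torus of \(G\): then \(\fg^T = \ft \subset \fh\) and every non-zero \(T\)-weight space of \(\fg\) is one-dimensional, so there is only one \(T\)-stable complement to \(\fh\) in \(\fg\), namely the sum of the root spaces \(\fg_\alpha\) with \(\fg_\alpha \not\subset \fh\); and \(\fh^\perp\), being \(H\)-stable and hence \(T\)-stable, must be this one. So the element produced by Remark \ref{rem:from-full-rank} really is our \(X^*\).

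Finally I would read off the conclusion. If \(X^*\) is nilpotent, Remark \ref{rem:nilp-is-nilp} forces \(X^*\semi = 0\), whence \(X^*_{H\,\semisub} = 0\) by the injectivity noted above, so \(X^*_H = X^*_{H\,\nilpsub}\) is nilpotent in \(\fh^*\). If \(X^*\) is semisimple, Lemma \ref{lem:semi-is-semi} forces \(X^*\nilp = 0\), whence \(X^*_{H\,\nilpsub} = 0\), so \(X^*_H = X^*_{H\,\semisub}\) is semisimple in \(\fh^*\). Almost the entire argument is formal once Remark \ref{rem:from-full-rank} is available; the only genuine subtlety --- and the closest thing to an obstacle --- is the identification of complements in the previous paragraph, which is exactly what ensures that the remark is describing our \(X^*\) rather than some other trivial extension.
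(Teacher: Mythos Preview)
Your proof is correct and follows essentially the same route as the paper's: take a Jordan decomposition of \(X^*_H\) in \(\fh^*\), lift it to a Jordan decomposition of \(X^*\) in \(\fg^*\) via Remark~\ref{rem:from-full-rank}, and then apply Remark~\ref{rem:nilp-is-nilp} or Lemma~\ref{lem:semi-is-semi}. Your extra paragraph verifying that the given \(H\)-stable complement \(\fh^\perp\) coincides with the \(T\)-stable complement \({}_T\fh^\perp\) used in Remark~\ref{rem:from-full-rank} is a welcome clarification that the paper's proof leaves implicit.
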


\begin{proof}
Write \(X^*_H\) for the restriction of \(X^*\) to \(\fh\), and
let \(X^*_H = X^*_{H\,\semisub} + X^*_{H\,\nilpsub}\) be
a Jordan decomposition.
Then extension of \(X^*_{H\,\semisub}\) and \(X^*_{H\,\nilpsub}\)
trivially across \(\fh^\perp\) to
elements \(X^*\semi\) and \(X^*\nilp\) of \(\fg^*\) gives
a Jordan decomposition \(X^* = X^*\semi + X^*\nilp\)
(by Remark \ref{rem:from-full-rank}), so
Lemma \ref{lem:semi-is-semi} gives that
\(X^*\nilp\) is trivial
if \(X^*\) is semisimple, or
Remark \ref{rem:nilp-is-nilp} gives that
\(X^*\semi\) is trivial
if \(X^*\) is nilpotent.
This means that \(X^*_H = X^*_{H\,\semisub} + X^*_{H\,\nilpsub}\)
equals
\(X^*_{H\,\semisub}\), respectively \(X^*_{H\,\nilpsub}\),
and so is
semisimple, respectively nilpotent.
\end{proof}

\subsection*{Acknowledgments}
We thank Sean Cotner and Stephen DeBacker for very inspiring discussions, and thank David Hansen, Friedrich Knop,
David Benjamin Lim, and Jason Starr for very helpful comments on a
MathOverflow question \cite{MO495402}.
The research of L.~Spice is supported by a gift from the Simons Foundation, award number 636151.
The research of C.-C.~Tsai is supported by NSTC grant 114-2115-M-001-009 and 114-2628-M-001-003.

\p
\bibliographystyle{amsalpha}
\bibliography{biblio.bib}
\end{document}